\documentclass{article}
\usepackage{tikz}
\usepackage{pgfplots}

\usepackage[english]{babel}
\usepackage{ifsym}
\usepackage{cite}
\usepackage{verbatim}
\usepackage[ansinew]{inputenc} 
\usepackage{amsmath, amsthm, amssymb}
\usepackage{nicefrac}
\usepackage{hyperref, graphicx}
\usepackage{geometry}
\usepackage{mathrsfs, stmaryrd, dsfont, nicefrac}
\usepackage{graphicx}
\usepackage{paralist}
\usepackage{subcaption}
\usepackage{enumitem}
\usepackage[linesnumbered,ruled]{algorithm2e}

\captionsetup{subrefformat=parens}

\newtheorem{definition}{Definition}
\newtheorem{remark}[definition]{Remark}
\newtheorem{theorem}[definition]{Theorem}
\newtheorem{proposition}[definition]{Proposition}
\newtheorem{corollary}[definition]{Corollary}
\newtheorem{lemma}[definition]{Lemma}
\newtheorem{example}[definition]{Example}

\DeclareMathOperator*{\argmin}{arg\, min}

\DeclareMathOperator{\Diag}{Diag}

\newcommand{\R}{\mathbb{R}}

\newcommand{\ignore}[1]{}

\providecommand{\keywords}[1]
{
  \small	
  \textbf{\textit{Keywords: }} #1
}

\providecommand{\ams}[1]
{
  \small	
  \textbf{\textit{AMS subject classifications: }} #1
}

\title{Solving two-parameter eigenvalue problems using an alternating method}
\author{Henrik Eisenmann\thanks{
Max Planck Institute for Mathematics in the Sciences, 04103 Leipzig, Germany; henrik.eisenmann@mis.mpg.de
} \and Yuji Nakatsukasa\thanks{
Mathematical Institute, University of Oxford, Oxford OX2 6GG, UK; nakatsukasa@maths.ox.ac.uk}}
\begin{document}

\maketitle
\begin{abstract}
We present a new approach to compute selected eigenvalues and eigenvectors of the two-parameter eigenvalue problem. Our method requires computing generalized eigenvalue problems of the same size as the matrices of the initial two-parameter eigenvalue problem. The method is applicable for right definite problems, possibly after performing an affine transformation. This includes a class of Helmholtz equations when separation of variables is applied.  We provide a convergence proof for extremal eigenvalues and empirical evidence along with a local convergence proof for other eigenvalues.
\end{abstract}

\keywords{Two-parameter eigenvalue problem, Alternating optimization, Helmholtz equation
}

\ams{65F15, 15A18, 15A69}
\normalsize
\section{Introduction}\label{setting}

In this work we consider the \emph{two-parameter eigenvalue problem}
\begin{equation}\label{2paraev}
\begin{aligned}
(A_1+\lambda  B_1+\mu C_1)u&=0,\\
(A_2+\lambda  B_2+\mu C_2)v&=0
\end{aligned}
\end{equation}
with matrices $A_1,B_1,C_1\in\R^{n\times n}$ and $A_2,B_2,C_2\in\R^{m\times m}$. A solution to this problem is given by possibly complex $(\lambda  ,\mu,u,v)$ if they fulfill \eqref{2paraev} and $u,v\neq 0$. We call the pair $(\lambda  ,\mu)$ an \emph{eigenvalue of the two-parameter eigenvalue problem} if $u,v\neq 0$ exist such that $(\lambda  ,\mu,u,v)$ satisfies \eqref{2paraev} and we call the tensor product $u\otimes v$ an \emph{eigenvector of the two parameter eigenvalue problem} (in the literature the pair $(u,v)$ is often referred to as the eigenvector; the terminology here is simply for convenience).

Two-parameter eigenvalue problems  have been extensively studied~\cite{Atkinson72,MR973644} and naturally arise in mathematical physics when separation of variables is applied. Consider for example the Helmholtz equation 
\begin{align*}
    \Delta u+\lambda  u&=0 \quad\text{in }  \Omega,\\
    u&=0 \quad\text{on } \partial\Omega
\end{align*}
where $\Omega=\{(x,y)\in \R^2:\left(\frac{x}{a}\right)^2+\left(\frac{y}{b}\right)^2<1, y>0\}$ is half of an open ellipse. Using elliptical coordinates, the problem can be reformulated into 
\begin{align}
\begin{aligned}\label{ex: elliptic}
     v''(r)+(\lambda  c^2\sinh^2(r)+\mu) v(r)&=0, \quad v(0)=0=v(R)\\
    w''(\varphi)+ (\lambda  c^2\sin^2(\varphi)-\mu )w(\varphi)&=0,  \quad w(0)=0=w(\pi)
\end{aligned}
\end{align}
with $u(c\cosh(r)\cos(\varphi),c\sinh(r)\sin(\varphi))=v(r)w(\varphi)$ and $\mu \in \R$ (this will be explained in Section \ref{examples}). This is of the form~\eqref{2paraev} after discretization. There are also many other applications that lead to~\eqref{2paraev}, including delay differential equations \cite{jarlebring2009polynomial} and optimization \cite{Shinsaku16}.

There are several numerical methods for solving this problem. 
The traditional approach~\cite{Atkinson72} solves an $n^2\times n^2$ generalized eigenvalue problem \eqref{matrixev} to find all solutions.
Another possibility is Jacobi-Davidson type methods discussed in~\cite{JacobiDavidson02,JacobiDavidson04}. These methods work well in finding eigenvalues close to a given target value. In~\cite{SilArnTwopara} a Sylvester-Arnoldi type method is described that can be used to find a small subset of eigenvalues and uses that Sylvester equations can be solved efficiently. Another possibility is  based on homotopy continutation, for example discussed in~\cite{Continuation2001,Homotopy16}. These aim to find all eigenvalues.

We present a new algorithm that can be seen as an \emph{alternating method} as they are for example considered in~\cite{Holtz2012} for tensors. A similar approach is used in \cite{Bailey_alternating} and \cite{Xingzhi_alternating}. In these methods they alternatingly solve the first eigenvalue problems for $\lambda $ while fixing $\mu$ and the second eigenvalue problem for  $\mu$ while fixing $\lambda $, and repeating the process. Another similar approach was taken in~\cite{Nonlineartwopara}. In there it is described how to transform a two-parameter eigenvalue problem with $n\gg m$ into a nonlinear eigenvalue problem and using techniques for nonlinear eigenvalue problems. By contrast, our method will choose either $\lambda $ or $\mu$ and solve two coupled linear eigenvalue problems alternatingly. This aims to reduce the complexity of finding one solution to~\eqref{2paraev}. The complexity is the one of solving a generalized eigenvalue problem with matrices of size $n\times n$. Hence, the number of operations is  $o(n^3)$ for one solution. 
While we only establish global convergence for extremal eigenpairs, empirically our algorithm also finds all solutions, with $o(n^5)$ cost.  It will turn out that this method can find eigenvalues based on their \emph{index}. The index of an eigenvalue of multiparameter eigenvalue problems is a generalisation of ordering real eigenvalues of a standard symmetric eigenvalue problems~\cite[Ch.~1]{MR973644}.  We will need the following assumptions to hold:
\begin{enumerate}[label=\textbf{A.\arabic*}]
    \item \label{Ass1} All matrices are symmetric; 
    \item The matrices  $C_2$ and $C_1\otimes B_2-B_1\otimes C_2$ are positve definite and $C_1$ is negative definite. \label{Ass2}
\end{enumerate}
Notice that in example \eqref{ex: elliptic} the second derivative is indeed a symmetric operator, and $\sinh^2(r)>0$ and $\sin^2(\varphi)>0$ almost everywhere. A discrete version of these equations hence leads to a problem of the type \eqref{2paraev} that satisfies the assumptions \ref{Ass1} and \ref{Ass2} up to signs.
\begin{remark}
We only require the matrices $C_1$, $C_2$, and $C_1\otimes B_2-B_1\otimes C_2$ to be definite, however we chose these specific definiteness assumptions for convenience as to not have a list of all cases in the following theorems. 
\end{remark}

\begin{remark}
The assumptions are not restrictive for right definite two parameter eigenvalue problems, that is for two parameter problems with positive or negative definite $C_1\otimes B_2-B_1\otimes C_2$. Indeed we can always find an affine transformation of \eqref{2paraev} satisfying the definiteness assumptions.

First we can always find an affine transformation to make one matrix definite. Assume $C_2$ is indefinite. We can then perform the affine transformation
\begin{align*}
B_1&\leftarrow B_1-\frac{v^\top B_2 v}{v^\top C_2 v} C_1\\
B_2&\leftarrow B_2-\frac{v^\top B_2 v}{v^\top C_2 v} C_2
\end{align*}
with $v$ such that $v^\top C_2 v >0$. The new $B_1$ is negative definite, as the positive definiteness of $C_1\otimes B_2-B_1\otimes C_2$ implies that 
\[
(u^\top C_1u )(v^\top B_2v)-(u^\top B_1u)( v^\top C_2v)>0
\]
 for any $u$ and $v$. If $C_2$ is definite we can do the following in a similar fashion, and if $C_2$ is semidefinite but not definite, then $C_1$ is definite. We may therefore assume $B_1$ to be negative definite.  Now let $u=\argmin \frac{u^\top C_1 u}{u^\top B_1 u}$ be a minimizer. We can perform the affine transformation
\begin{align*}
C_1&\leftarrow C_1-(\frac{u^\top C_1 u}{u^\top B_1 u}-\epsilon) B_1\\
C_2&\leftarrow C_2-(\frac{u^\top C_1 u}{u^\top B_1 u} -\epsilon)B_2
\end{align*}
for a sufficiently small $\epsilon>0$. Now the positive definiteness of $C_1\otimes B_2-B_1\otimes C_2$ and negative definiteness of $B_1$ implies that the new $C_2$ is positive definite and negative definiteness of $B_1$ and minimality of $u$ implies negative definiteness of the new $C_1$.

Note that when replacing $B_1,B_2$ and $C_1,C_2$ by the linear combination of matrices 
\[\tilde B_1=b_1 B_1+b_2 C_1,\quad\tilde B_2=b_1 B_2+b_2 C_2
\]
 and 
\[ 
 \tilde C_1=c_1 B_1+c_2 C_1,\quad\tilde C_2=c_1 B_2+c_2 C_2
\]
and find the eigenvalue $(\tilde\lambda,\tilde\mu)$, we can recover the original eigenvalue $(\lambda,\mu)$ via 
\[
\lambda=b_1\tilde\lambda+c_1\tilde\mu ,\quad\mu=b_2\tilde\lambda+c_2\tilde\mu.
\]
\end{remark}
This article is organized as follows: In Section~\ref{Sec:Main}, we first motivate our method and state useful results from multiparameter eigenvalue theory. Afterwards we discuss fixed point properties of the algorithm and prove convergence for extremal eigenvalues. Finally, we examine the time complexity of our method briefly. In Section~\ref{examples}, we show that a class of boundary value problems satisfies our assumptions if properly discretized. Finally, we exhibit results of numerical experiments in Section~\ref{Sec: Numerics}.

\section{An alternating algorithm for the two-parameter problem}\label{Sec:Main}

In this section, we derive our algorithm and prove local convergence for all eigenvalues and global convergence for extremal eigenvalues. First, let us fix our notation. By $\otimes$ we denote the \emph{tensor product}. For vectors this can be seen as the outer product, i.e., $u\otimes v$ corresponds to the rank-one matrix $uv^\top$. The tensor product of matrices then corresponds to a linear operator acting on matrices, i.e., the product $A\otimes B$ acts on the matrix $X$ by $AXB^\top$. This means that $(A\otimes B) (u\otimes v)=(Au\otimes Bv)$. It can also be seen as the Kronecker product for matrices~\cite[Ch.~12]{golubbook4th}. Then the product of two vectors $u\otimes v$ can be reshaped into a rank-one matrix.

\subsection{Derivation of the algorithm}\label{derivation}

The two-parameter eigenvalue problem \eqref{2paraev} can be reduced to two generalized eigenvalue problems~\cite{Atkinson72}. For this define the operators acting on $n\times m$ matrices
\begin{align}
\begin{aligned}
    M_0=B_1\otimes C_2-C_1 \otimes B_2,\\
    M_1=A_1\otimes C_2-C_1 \otimes A_2,\\
    M_2=B_1\otimes A_2-A_1 \otimes B_2.    
    \end{aligned}
\end{align}
A straightforward computation shows that a solution of \eqref{2paraev} satisfies
\begin{align*}
\begin{aligned}
    M_1 (u\otimes v)+\lambda  M_0(u\otimes v)=0,\\
    M_2 (u\otimes v)+\mu M_0(u\otimes v)=0.    
\end{aligned}
\end{align*}
Hence, solutions of \eqref{2paraev} are also rank-one solutions $X$ of the system of generalized matrix eigenvalue problems 
\begin{align}
\begin{aligned}\label{matrixev}
    M_1(X)+\lambda M_0(X)=0,\\
    M_2(X)+\mu M_0(X)=0,
    \end{aligned}
\end{align}
which are of size $nm\times nm$. Note that the eigenvectors are shared between the two generalized eigenvalue problems.
Also under our assumptions solutions of~\eqref{matrixev} lead to solutions of~\eqref{2paraev}. This is a well known consequence of classical theory~\cite{Atkinson72,MR973644}. We provide a self contained proof for completeness.
\begin{lemma}
Under the assumptions~\ref{Ass1} and~\ref{Ass2}, the eigenvectors of the generalized eigenvalue problem $M_1(X)+\lambda M_0(X)=0$ are either rank one or are a linear combination of rank-one eigenvectors. A rank-one eigenvector is then also an eigenvector of the corresponding two-parameter eigenvalue problem.
\end{lemma}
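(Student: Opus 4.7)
The plan is to rewrite $M_1 + \lambda M_0$ so that its action on a matrix $X \in \R^{n\times m}$ becomes a Sylvester-type equation. The factorization
\[
M_1 + \lambda M_0 \;=\; (A_1 + \lambda B_1) \otimes C_2 \;-\; C_1 \otimes (A_2 + \lambda B_2)
\]
turns $(M_1 + \lambda M_0)(X)=0$ into $(A_1 + \lambda B_1)\, X\, C_2 \;=\; C_1\, X\, (A_2 + \lambda B_2)$. Since \ref{Ass2} makes $C_1$ (negative definite) and $C_2$ (positive definite) invertible, this is equivalent to
\[
P X = X Q, \qquad P := -C_1^{-1}(A_1 + \lambda B_1), \qquad Q := -(A_2 + \lambda B_2)\, C_2^{-1}.
\]

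I would first handle the easier half: a rank-one eigenvector is a two-parameter eigenvector. Substituting $X = u v^\top$ into the matrix form yields
\[
\bigl((A_1 + \lambda B_1) u\bigr) (C_2 v)^\top \;=\; (C_1 u)\, \bigl((A_2 + \lambda B_2) v\bigr)^\top.
\]
Both $C_1 u$ and $C_2 v$ are nonzero because $C_1, C_2$ are definite, so comparing the column and row spans of these outer products forces $(A_1 + \lambda B_1) u = -\mu\, C_1 u$ and $(A_2 + \lambda B_2) v = -\mu\, C_2 v$ for one common scalar $\mu$, recovering \eqref{2paraev}.

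For the structural claim I would exploit that $P$ is similar to a real symmetric matrix through conjugation by $(-C_1)^{1/2}$, and analogously $Q$ through $C_2^{1/2}$; both square roots exist thanks to \ref{Ass2}. Hence $P$ and $Q$ are each diagonalizable with real spectra, say $P = U\Lambda U^{-1}$ and $Q = \widehat V \widehat M \widehat V^{-1}$. Substituting $Y := U^{-1} X \widehat V$ into $PX = XQ$ gives $\Lambda Y = Y \widehat M$, i.e.\ $(\Lambda_{ii} - \widehat M_{jj})\, Y_{ij} = 0$. Consequently the solution space is spanned by rank-one matrices $u_i\, \widehat v_j^{\,\top}$ over index pairs with $\Lambda_{ii} = \widehat M_{jj}$, where $u_i$ is the $i$-th column of $U$ and $\widehat v_j^{\,\top}$ the $j$-th row of $\widehat V^{-1}$. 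Each such basis element is rank one, hence a two-parameter eigenvector by the previous paragraph, and any eigenvector $X$ of $M_1 + \lambda M_0$ is by construction a linear combination of them.

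The delicate point is the diagonalizability of $P$ and $Q$: it rests crucially on \ref{Ass2} producing positive definite square roots of $-C_1$ and of $C_2$, since without diagonalizability one would have to grapple with Jordan structure and the clean rank-one decomposition of the eigenspace could fail. The remaining pieces — the Sylvester rewriting and the outer-product comparison — are routine once the definiteness has been used.
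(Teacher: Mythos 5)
Your route is, in substance, the paper's own: both proofs rewrite $(M_1+\lambda M_0)(X)=0$ as a Sylvester-type equation and conclude that every solution is a combination of outer products of eigenvectors of the two one-sided matrices. The paper first normalizes $-C_1=I_n$, $C_2=I_m$ by a congruence and then cites the rank-one eigenstructure of Sylvester operators, whereas you keep $C_1,C_2$ in place and diagonalize $P=-C_1^{-1}(A_1+\lambda B_1)$ and $Q=-(A_2+\lambda B_2)C_2^{-1}$ explicitly via $\Lambda Y=Y\widehat M$; that is the same idea with different bookkeeping, and your rank-one-implies-\eqref{2paraev} step is the paper's computation phrased through the outer products.

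There is one genuine gap: the diagonalizability argument silently assumes $\lambda\in\R$. The lemma is about eigenvectors of the pencil, and a priori an eigenvalue $\lambda$ could be non-real; in that case $(-C_1)^{-1/2}(A_1+\lambda B_1)(-C_1)^{-1/2}$ is only complex symmetric, not Hermitian, and complex symmetric matrices need not be diagonalizable, so the clean reduction to $(\Lambda_{ii}-\widehat M_{jj})Y_{ij}=0$ is not justified -- and this is exactly the point you yourself single out as the delicate one. The paper closes this in one sentence before anything else: $M_1$ is symmetric and $-M_0=C_1\otimes B_2-B_1\otimes C_2$ is positive definite by \ref{Ass2}, so the pencil is definite and all its eigenvalues are real; concretely, if $(M_1+\lambda M_0)(X)=0$ with $X\neq0$, then $\lambda=\langle X,M_1(X)\rangle/\langle X,-M_0(X)\rangle\in\R$ (equivalently, one may note that for non-real $\lambda$ the operator $M_1+\lambda M_0$ is nonsingular, so no eigenvectors exist). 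Insert that observation and your proof is complete as written.
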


\begin{proof}
The assumption \ref{Ass2} ensures that the eigenspaces of each subproblem in \eqref{matrixev} is spanned by rank-one matrices. 
To see this, first note that the operator $M_0$ is negative definite, and $M_1$ and $M_2$ are symmetric. Therefore all eigenvalues of $\eqref{matrixev}$ are real. Second, we can without loss of generality assume $-C_1$ and $C_2$ to be identity matrices, else we just transform the matrices to $\Tilde{A_1}=(-C_1)^{-\frac{1}{2}}A_1(-C_1)^{-\frac{1}{2}}$, $\Tilde{B_1}=(-C_1)^{-\frac{1}{2}}B_1(-C_1)^{-\frac{1}{2}}$, etc., and $\Tilde{u}=(-C_1)^\frac{1}{2}u$, $\Tilde{v}=C_2^\frac{1}{2}v$. Then $M_0$ and $M_1$ are operators in the form of a \emph{Sylvester equation}, i.e., they have the form
\begin{align*}
 M_0=B_1\otimes I_m+I_n \otimes B_2,\\
    M_1=A_1\otimes I_m+I_n \otimes A_2.  
\end{align*}
A solution to \eqref{matrixev} then satisfies
\begin{align*}
   \left( (A_1+\lambda B_1)\otimes I_m+I_n\otimes (A_2+\lambda B_2)\right)(X)=0,
\end{align*}
i.e., $X$ is a zero solution of a Sylvester equation. Such operators possess an orthonormal basis of rank-one eigenvectors, namely $u\otimes v$, where $u$ is an eigenvector of $(A_1+\lambda B_1)$ and $v$ of  $(A_2+\lambda  B_2)$ \cite[Theorem 4.4.5]{Horn_Topics}, both of which are real symmetric and therefore possess an orthonormal basis of eigenvectors. It follows that $X$ is a sum of rank-one eigenvectors corresponding to the eigenvalue $0$. The dimension equals that of the null space of $M_1(X)+\lambda M_0(X)$, i.e., the geometric multiplicity of $\lambda $.

We next show that a rank-one solution to just one of the eigenvalue problems in \eqref{matrixev} suffices to get a solution of the initial problem~\eqref{2paraev}. Indeed, assume that $X=u\otimes v$ solves the second eigenvalue problem in \eqref{matrixev}. We then get
\begin{align*}
    0=&M_1 (u\otimes v)+\lambda  M_0(u\otimes v)\\
    = &A_1u\otimes C_2v-C_1u \otimes A_2v+\lambda B_1u\otimes C_2v-\mu C_1u \otimes B_2v\\
    =&(A_1+\lambda B_1)u\otimes C_2v- C_1u \otimes(A_2+\lambda B_2)v,
\end{align*}
which holds true if and only if there is a $\mu$ such that 
\begin{align*}
   - \mu C_1u&=(A_1+\lambda  B_1)u,\\
  -  \mu C_2v&=(A_2+\lambda B_2)v.
\end{align*}
This implies \eqref{2paraev}. 
\end{proof}

We can conclude that \eqref{2paraev} has essentially $n m$ solutions which can be obtained by computing the rank-one solutions of one of the eigenvalue problems in \eqref{matrixev}. In the following, we select the first of these eigenvalue problems.

The assumptions imply that the operators $M_0$ and $M_1$ are symmetric and $M_0$ is negative definite. Thus, the solution of 
\begin{align}
    M_1(X)+\lambda M_0(X)=0 \label{2meigenvalue}
\end{align}
with \emph{maximal} eigenvalue $\lambda $ can be obtained by maximizing the \emph{Rayleigh quotient}
\begin{align}\label{rquotient}
    \mathfrak{R}(X)=\frac{\langle X,M_1(X) \rangle}{\langle X,-M_0(X) \rangle},
\end{align}
and since the solution is a rank-one matrix, we can just maximize $ \mathfrak{R}(u\otimes v)$ over $u$ and $v$. For convenience, define the functions
\begin{align*}
    a_1(u)&=u^\top A_1 u,&\quad b_1(u)&=u^\top B_1 u, &\quad c_1(u)&=u^\top C_1 u,\\
    a_2(v)&=v^\top A_2 v,&\quad b_2(v)&=v^\top B_2 v, &\quad c_2(v)&=v^\top C_2 v.
\end{align*}
The assumption~\ref{Ass2} assures that  $b_2(v)C_1-c_2(v) B_1$ and $ c_1(u) B_2-b_1 (u)C_2$ are positive definite.
We can then write
\begin{align*}
\mathfrak{R}(u\otimes v) =-\frac{u^\top (c_2(v) A_1-a_2(v) C_1)u}{u^\top (c_2(v) B_1-b_2(v) C_1)u}.
\end{align*}
For fixed $v$ the matrix in the denominator is negative definite. Hence, the maximal value is given by the maximal eigenvalue of the generalized eigenvalue problem
\begin{align}
    \left(c_2(v) A_1-a_2(v) C_1\right)u=\lambda  (b_2(v) C_1-c_2(v) B_1)u,\label{ev1}
\end{align}
and respectively fixing $u$, the maximal value of
\begin{align*}
\mathfrak{R}(u\otimes v) =-\frac{v^\top (a_1(u)C_2-c_1(u)A_2 )v}{v^\top (b_1(u)C_2 -c_1(u)B_2 )v}
\end{align*}
 is given by the maximal eigenvalue of 
\begin{align}
    \left(a_1(u)C_2 -c_1(u)A_2\right)v=\lambda  (c_1(u)B_2-b_1(u)C_2 )v.\label{ev2}
\end{align}
These are generalized eigenvalue problems with matrices of size $n\times n$ and $m\times m$, while~\eqref{2meigenvalue} is a generalized eigenvalue problem of size $nm\times nm$.

A similar alternating procedure is of course obtained when minimizing the Rayleigh quotient in \eqref{rquotient}, i.e., when aiming at the smallest eigenvalue of \eqref{2meigenvalue}. More generally, nothing even prevents us from updating $u$ and $v$ with non-extremal eigenpairs of the subproblems \eqref{ev1} and \eqref{ev2} in the hope of finding non-extremal eigenpairs of \eqref{2meigenvalue}. For instance, we can solve \eqref{ev1} for 
the $i$-th eigenvalue and \eqref{ev2} for the $j$-th eigenvalue, for some fixed $i$ and $j$.
This idea motivates Algorithm \ref{algo}.  

We will call a pair of eigenvectors $u$ and $v$ a \emph{fixed point of Algorithm~\ref{algo}} if it simultaneously solves the eigenvalue problems~\eqref{ev1} and \eqref{ev2}. Then choosing the corresponding index $(i,j)$ the algorithm will not change $u$ and $v$ anymore, provided eigenvalues are simple.
\begin{algorithm}[!tb]
\caption{Alternating Algorithm for solving two-parameter eigenvalue problems.}\label{algo}
    \SetKwInOut{Input}{Input}
    \SetKwInOut{Output}{Output}

    \Input{Matrices $A_i,B_i,C_i$ for $i=1,2$ satisfying \ref{Ass1} and \ref{Ass2} and index $(i,j)$.}
    \Output{Eigenvalue $(\lambda ,\mu )$ of index $(i,j)$ with corresponding eigenvector $u\otimes v$.}
    select random nonzero $u_0\in\R^{n}$\;
    \For{k=1,2,3,\dots}{
        $a_1:=a_1(u_{k-1}),\quad b_1:=b_1(u_{k-1}),\quad c_1:=c_1(u_{k-1})$\;
        compute the eigenvector $v_k$ corresponding to the $j$-th smallest eigenvalue of the symmetric right definite generalized eigenvalue problem\begin{align*}
           \left(a_1 C_2 -c_1 A_2\right)v=\lambda  ( c_1 B_2-b_1 C_2)v;
        \end{align*}\\
        $a_2:=a_2(v_k);\quad b_2:=b_2(v_k);\quad c_2:=c_2(v_k)$\;
        compute the eigenpair $(u_k,\lambda_k )$ corresponding to the $i$-th smallest eigenvalue of the symmetric right definite generalized eigenvalue problem\begin{align*}
            \left(c_2 A_1-a_2  C_1\right)u=\lambda  (b_2 C_1-c_2 B_1)u;
        \end{align*}\\
    }
	$\lambda:=\lambda_k$, \quad $u:=u_k$,\quad $v:=v_k$,\quad
    $\mu :=-\frac{a_2+\lambda b_2}{c_2}$\;
 
    \KwRet{$(\lambda ,\mu) $ and $u\otimes v$.}
\end{algorithm}

\subsection{Fixed point properties}

Our aim in this section is to show that in principle we can find all rank-one eigenpairs of the problem~\eqref{2meigenvalue}, and hence of~\eqref{2paraev}, by finding fixed points of the subproblems~\eqref{ev1} and~\eqref{ev2} solved in Algortihm~\ref{algo} for all possible input indices $(i,j)\in\{1,\dots,n\}\times\{1,\dots,m \}$. The first step in this direction is the following lemma, which shows
 that a fixed point of the algorithm  indeed provides a solution to~\eqref{2meigenvalue} and~\eqref{2paraev}. 
\begin{lemma}\label{lem: fixed point}
Let $u$ and $v$ simultaneously solve \eqref{ev1} and \eqref{ev2}. Under the assumptions {\upshape \ref{Ass1}} and {\upshape \ref{Ass2}} both eigenvalues coincide. Moreover, setting $\mu :=-\frac{a_2(v)+\lambda b_2(v)}{c_2(v)}$,  $(u\otimes v,\lambda )$ is an eigenpair of \eqref{2meigenvalue} and $(u,v,\lambda ,\mu )$ is a solution for the two-parameter problem \eqref{2paraev}.
\end{lemma}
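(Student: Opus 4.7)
The plan is to reduce the lemma to a short sequence of scalar identities obtained by left-multiplying \eqref{ev1} by $u^\top$ and \eqref{ev2} by $v^\top$. Writing $\lambda_1$ and $\lambda_2$ for the eigenvalues appearing in the two subproblems, these scalar forms read
\begin{align*}
c_2(v)\,a_1(u) - a_2(v)\,c_1(u) &= \lambda_1\bigl(b_2(v)\,c_1(u) - c_2(v)\,b_1(u)\bigr),\\
a_1(u)\,c_2(v) - c_1(u)\,a_2(v) &= \lambda_2\bigl(c_1(u)\,b_2(v) - b_1(u)\,c_2(v)\bigr).
\end{align*}
Since the two left-hand sides are identical and the coefficients of $\lambda_1,\lambda_2$ on the right-hand sides are as well, $\lambda_1=\lambda_2$ follows as soon as that common coefficient is nonzero. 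I would verify nonvanishing via the observation
\[
c_1(u)b_2(v)-b_1(u)c_2(v) = \langle u\otimes v,(C_1\otimes B_2-B_1\otimes C_2)(u\otimes v)\rangle > 0,
\]
which is exactly assumption \ref{Ass2} (keeping in mind $u,v\neq 0$). This step is the only place where a positive-definiteness assumption is actually needed, and I expect it to be the most delicate bookkeeping item.

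Once $\lambda:=\lambda_1=\lambda_2$ is established, I would rearrange \eqref{ev1} as
\[
c_2(v)(A_1+\lambda B_1)u = \bigl(a_2(v)+\lambda b_2(v)\bigr)C_1 u,
\]
so that, using $c_2(v)\neq 0$ (since $C_2$ is positive definite by \ref{Ass2}) and the definition $\mu=-(a_2(v)+\lambda b_2(v))/c_2(v)$, one obtains $(A_1+\lambda B_1+\mu C_1)u=0$, the first half of \eqref{2paraev}.

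For the second half, rearranging \eqref{ev2} analogously gives $(A_2+\lambda B_2)v = \tfrac{a_1(u)+\lambda b_1(u)}{c_1(u)}\,C_2 v$. The key point is that the scalar identity from the very first step can be rewritten as
\[
\frac{a_1(u)+\lambda b_1(u)}{c_1(u)} = \frac{a_2(v)+\lambda b_2(v)}{c_2(v)},
\]
so the coefficient appearing here is $-\mu$ as well. This gives $(A_2+\lambda B_2+\mu C_2)v=0$, completing \eqref{2paraev}. Finally, that $(u\otimes v,\lambda)$ is an eigenpair of \eqref{2meigenvalue} is immediate from \eqref{2paraev} via the straightforward computation noted at the start of Section \ref{derivation} (expanding $M_1(u\otimes v)+\lambda M_0(u\otimes v)$ and substituting the two relations just derived).
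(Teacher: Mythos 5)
Your proposal is correct and follows essentially the same route as the paper's proof: left-multiplying \eqref{ev1} and \eqref{ev2} by $u^\top$ and $v^\top$, using the positivity of $c_1(u)b_2(v)-b_1(u)c_2(v)$ from \ref{Ass2} to identify the two eigenvalues, then rearranging the subproblems and invoking the same scalar identity to see that both expressions for $\mu$ agree, with the eigenpair property for \eqref{2meigenvalue} deferred to the computation in Section~\ref{derivation}. The only cosmetic difference is that you anchor $\mu$ to the formula in the lemma statement and verify the other coefficient equals $-\mu$, whereas the paper defines $\mu$ from the $(u,c_1,b_1,a_1)$ side first; the content is identical.
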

\begin{proof}
Denote the eigenvalue in \eqref{ev1}  and \eqref{ev2} by $\hat \lambda$ and $\tilde\lambda$, respectively. Then multiplying \eqref{ev1} by $u^\top$ and \eqref{ev2} by $v^\top$, we get
\begin{align*}
    c_2(v)a_1(u)-a_2(v)c_1(u)=-\hat\lambda (c_2(v)b_1(u)-b_2(v)c_1(u))
\end{align*}
and
\begin{align*}
    c_2(v)a_1(u)-a_2(v)c_1(u)=-\tilde\lambda (c_2(v)b_1(u)-b_2(v)c_1(u)).
\end{align*}
Since \ref{Ass2} implies $c_2(v)b_1(u)-b_2(v)c_1(u)<0$, we have $\hat \lambda=\tilde\lambda=:\lambda $. Collecting terms in \eqref{ev1} and \eqref{ev2} gives
\begin{align*}
    c_2(v)A_1u +c_2\lambda  B_1u -(a_2+\lambda  b_2(v)) C_1 u&=0,\\
    -c_1(u) A_2 v -c_1\lambda B_2 v+ (a_1 +\lambda b_1(u)) C_2v&=0.
\end{align*}
Dividing the first equation by $c_2(v)>0$ and the second by $-c_1(u)>0$, we get \eqref{2paraev} with $\mu =-\frac{a_1(u)+\lambda  b_1(u)}{c_1(u)}$, if $$\frac{a_2(v)+\lambda  b_2(v) }{c_2(v)}=\frac{a_1(u)+\lambda  b_1(u) }{c_1(u)}.$$
This equation is however just a consequence of
\begin{align*}
    c_2(v)a_1(u)-a_2(v)c_1(u)=-\lambda (c_2(v)b_1(u)-b_2(v)c_1(u)).
\end{align*}
By the considerations in Section \ref{derivation} $(u\otimes v,\lambda )$ is also a solution of \eqref{2meigenvalue}.
\end{proof}

The previous lemma does not yet let us conclude that all solutions of \eqref{2paraev} occur as fixed points of Algorithm \ref{algo} when varying the input index $(i,j)$. To show this, we  need to introduce the notion of the \emph{index of an eigenvalue} $(\lambda ,\mu )$ of the two-parameter eigenvalue problem.
\begin{definition}\label{defindex}
An eigenvalue $(\lambda ,\mu )$ of the two-parameter eigenvalue problem has the index $(i,j)$ if $0$ is the $i$-th smallest eigenvalue of $A_1+\lambda  B_1+ \mu  C_1$  and the $j$-th smallest eigenvalue of $A_2+\lambda  B_2+ \mu  C_2$.  
\end{definition}
If $0$ is a multiple eigenvalue of $A_1+\lambda  B_1+ \mu  C_1$ or $A_2+\lambda  B_2+ \mu  C_2$, then the corresponding eigenvalue of the two-parameter eigenvalue problem has multiple indices as well.
Under the assumption \ref{Ass1} every real valued eigenvalue of the two-parameter eigenvalue problem has an index since the matrices  $A_1+\lambda  B_1+ \mu  C_1$ and $A_2+\lambda  B_2+ \mu  C_2$  are symmetric.
More important for us is the following result, which immediately follows from~\cite[Theorem 1.4.1]{MR973644}.
\begin{theorem}\label{thindex}
Under the assumptions {\upshape \ref{Ass1}} and {\upshape \ref{Ass2}} there is a unique eigenvalue $(\lambda , \mu )$ to every index $(i,j)\in\{1,\dots,n\}\times \{1,\dots,m\}$.
\end{theorem}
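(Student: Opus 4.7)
The plan is to invoke the classical Atkinson/Volkmer framework by re-encoding the two-parameter problem as the zero set of a pair of continuous parameter-dependent eigenvalue curves, and then use a one-dimensional intermediate value / monotonicity argument to show existence and uniqueness for each index.

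First, I would use Cholesky factorizations: since A.2 gives $-C_1\succ 0$ and $C_2\succ 0$, write $-C_1 = LL^\top$ and $C_2 = MM^\top$, and set
\[
P_1(\lambda) := L^{-1}(A_1+\lambda B_1)L^{-\top},\qquad P_2(\lambda) := M^{-1}(A_2+\lambda B_2)M^{-\top}.
\]
These are symmetric and depend linearly on $\lambda$, so their eigenvalues are continuous (indeed piecewise real-analytic) functions of $\lambda$. Denote by $\mu_i(\lambda)$ the $i$-th smallest eigenvalue of $P_1(\lambda)$ and by $\nu_j(\lambda)$ the $j$-th smallest eigenvalue of $P_2(\lambda)$. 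By Sylvester's law of inertia applied to the congruences $A_1+\lambda B_1+\mu C_1 = L(P_1(\lambda)-\mu I)L^\top$ and $A_2+\lambda B_2+\mu C_2 = M(P_2(\lambda)+\mu I)M^\top$, the index-$(i,j)$ condition from Definition \ref{defindex} translates exactly to $\mu = \mu_i(\lambda) = -\nu_j(\lambda)$. So I need to show that, for each $(i,j)$, the scalar equation
\[
f_{ij}(\lambda) := \mu_i(\lambda) + \nu_j(\lambda) = 0
\]
has exactly one real root.

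For uniqueness I would prove strict monotonicity of $f_{ij}$. At any $\lambda$ where the relevant eigenvalues are simple, the Hellmann--Feynman formula gives $\mu_i'(\lambda) = u^\top B_1 u$ and $\nu_j'(\lambda) = v^\top B_2 v$, where $u, v$ are the eigenvectors normalized so that $u^\top(-C_1)u = 1$ and $v^\top C_2 v = 1$. The right-definiteness assumption A.2, applied to the rank-one tensor $u\otimes v$, yields
\[
c_1(u)b_2(v) - b_1(u)c_2(v) > 0,\qquad\text{i.e.,}\qquad -b_2(v) - b_1(u) > 0,
\]
so $f_{ij}'(\lambda) = u^\top B_1 u + v^\top B_2 v < 0$. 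At the (isolated) eigenvalue-crossing points, $\mu_i$ and $\nu_j$ remain Lipschitz with one-sided derivatives of the same sign, so $f_{ij}$ is strictly decreasing on $\mathbb{R}$.

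For existence I would show $f_{ij}(\lambda)\to \pm\infty$ as $\lambda\to\mp\infty$; this is the main obstacle, because it requires a quantitative asymptotic comparison of the growth rates of $\mu_i(\lambda)$ and $-\nu_j(\lambda)$. Since $P_k(\lambda)/\lambda$ converges to a fixed symmetric matrix as $|\lambda|\to\infty$, the eigenvalues $\mu_i(\lambda)$ and $\nu_j(\lambda)$ are asymptotically linear in $\lambda$, with slopes given by certain eigenvalues of $L^{-1}B_1L^{-\top}$ and $M^{-1}B_2M^{-\top}$. The inequality $b_1(u)/c_1(u) > b_2(v)/c_2(v)$ (which is the rank-one form of A.2 after rearrangement) forces a strict separation between these asymptotic slopes with the correct sign, so that $f_{ij}$ changes sign at infinity. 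Combined with strict monotonicity and continuity, the intermediate value theorem yields a unique $\lambda_{ij}$ with $f_{ij}(\lambda_{ij})=0$; setting $\mu_{ij} = \mu_i(\lambda_{ij})$ produces the unique eigenvalue of index $(i,j)$, which is precisely the content of \cite[Theorem 1.4.1]{MR973644} that the theorem invokes.
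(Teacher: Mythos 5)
Your proposal is correct, but it takes a genuinely different route from the paper: the paper gives no proof of Theorem~\ref{thindex} at all, simply asserting that it ``immediately follows from'' \cite[Theorem 1.4.1]{MR973644}, whereas you reprove that classical fact by the standard eigencurve argument. Concretely, your congruences $A_1+\lambda B_1+\mu C_1=L\bigl(P_1(\lambda)-\mu I\bigr)L^\top$ and $A_2+\lambda B_2+\mu C_2=M\bigl(P_2(\lambda)+\mu I\bigr)M^\top$ correctly turn the index condition of Definition~\ref{defindex} into the scalar equation $\mu_i(\lambda)+\nu_j(\lambda)=0$; Hellmann--Feynman together with the rank-one evaluation of \ref{Ass2} (under the normalizations $-c_1(u)=c_2(v)=1$) gives $b_1(u)+b_2(v)<0$, hence strict decrease of $f_{ij}$, also at the locally finitely many kinks where the one-sided derivatives have the same form. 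The only place you are vaguer than necessary is the existence step, which is in fact easier than your phrasing suggests: by Weyl's inequality $\mu_i(\lambda)=\lambda\beta_i+O(1)$ as $\lambda\to+\infty$ and $\lambda\beta_{n+1-i}+O(1)$ as $\lambda\to-\infty$, where the $\beta$'s are the eigenvalues of $L^{-1}B_1L^{-\top}$, and similarly for $\nu_j$ with the eigenvalues $\gamma$ of $M^{-1}B_2M^{-\top}$; evaluating the rank-one form of \ref{Ass2} at the corresponding eigenvectors shows $\beta+\gamma<0$ for \emph{every} pair, so $f_{ij}(\lambda)/\lambda$ has a negative limit at both ends, $f_{ij}$ changes sign, and the intermediate value theorem plus monotonicity finishes the argument. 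What your route buys is a self-contained proof that dovetails with the machinery the paper introduces later anyway (the curves $\gamma_i,\zeta_j$ and the derivative relation $\mu'(\lambda)=-b_1(u)/c_1(u)$ in the geometric-interpretation subsection); what the paper's route buys is brevity, outsourcing existence and uniqueness per index to the established multiparameter spectral theory.
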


The idea is now to show that the index  in the sense of Definition \ref{defindex}  of the eigenvalue solution provided by a fixed point of Algorithm \ref{algo} coincides with the given input index $(i,j)$. Together with Theorem \ref{thindex} this then implies that all solutions can be obtained this way.
For this we need the following version of Sylvester's law of inertia.

\begin{lemma}\label{evlemma}
Let $A,B$ be symmetric matrices and $I+B$ be positive definite. Then $\lambda _i$ is the $i$-th largest eigenvalue of $A$ if and only if it is the $i$-th largest eigenvalue of the generalized eigenvalue problem
\begin{align*}
    (A+\lambda _i B)u=\lambda  (I+B) u.
\end{align*}
\end{lemma}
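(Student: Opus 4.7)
The plan is to reduce the generalized eigenvalue problem to an equivalent standard symmetric eigenvalue problem and then apply Sylvester's law of inertia to match the positions of $\lambda_i$ in the two sorted spectra. Since $I+B$ is positive definite, its unique positive definite square root $C := (I+B)^{1/2}$ is invertible, so the generalized problem $(A+\lambda_i B)u = \lambda(I+B)u$ is equivalent, via the change of variable $v = Cu$, to the standard symmetric problem $Mv = \lambda v$ with
\[
M \;:=\; C^{-1}(A+\lambda_i B)\,C^{-1}.
\]
In particular the spectrum of the generalized problem coincides with the spectrum of $M$, so it suffices to show that $\lambda_i$ is the $i$-th largest eigenvalue of $A$ exactly when it is the $i$-th largest eigenvalue of $M$.

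The key algebraic observation is
\[
M - \lambda_i I \;=\; C^{-1}\bigl(A + \lambda_i B - \lambda_i (I+B)\bigr)C^{-1} \;=\; C^{-1}(A - \lambda_i I)\,C^{-1},
\]
which exhibits $M-\lambda_i I$ and $A-\lambda_i I$ as symmetrically congruent matrices. By Sylvester's law of inertia they share the same numbers of positive, negative, and zero eigenvalues.

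To close the argument, let $p$ be the number of eigenvalues of $A$ strictly greater than $\lambda_i$ and $q := \dim\ker(A-\lambda_i I)$. Then $\lambda_i$ is the $i$-th largest eigenvalue of $A$ (counted with multiplicity) precisely for those indices satisfying $p < i \leq p+q$. The inertia match above forces the eigenvalues of $M$ to split about $\lambda_i$ in exactly the same proportions, so the same range of indices describes when $\lambda_i$ is the $i$-th largest eigenvalue of $M$. This yields both implications of the \emph{iff} at once. I do not anticipate a genuine obstacle; the only delicate point is the treatment of a possible multiple eigenvalue at $\lambda_i$, and the inertia-based formulation of ``$i$-th largest'' handles this uniformly.
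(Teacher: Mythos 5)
Your proof is correct and follows essentially the same route as the paper: reduce the generalized problem to the standard symmetric matrix $(I+B)^{-1/2}(A+\lambda_i B)(I+B)^{-1/2}$, observe that subtracting $\lambda_i I$ makes it congruent to $A-\lambda_i I$, and invoke Sylvester's law of inertia. Your explicit handling of a possible multiple eigenvalue at $\lambda_i$ via the index range $p < i \leq p+q$ is a slightly more careful spelling-out of the same counting argument.
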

\begin{proof}
Consider the matrices $M_1=A-\lambda _i I$ and $M_2=(I+B)^{-\frac{1}{2}}(A-\lambda _i I)(I+B)^{-\frac{1}{2}}$. The matrix $M_2$ is well defined since $I+B$ is positive definite and since $(I+B)^{-\frac{1}{2}}=((I+B)^{-\frac{1}{2}})^T)$ is invertible, $M_1$ and $M_2$ are congruent and therefore have the same number of positive and respectively negative eigenvalues by Sylvester's law of inertia \cite[Theorem 4.5.8]{MR2978290}. 

We can rewrite $M_2$ in the following way:
\begin{align*}
M_2&=(I+B)^{-\frac{1}{2}}(A-\lambda _i I)(I+B)^{-\frac{1}{2}}\\
&=(I+B)^{-\frac{1}{2}}(A+\lambda _i B-\lambda _iB-\lambda _i I)(I+B)^{-\frac{1}{2}}\\
&=(I+B)^{-\frac{1}{2}}(A+\lambda _i B)(I+B)^{-\frac{1}{2}}-\lambda _i I.
\end{align*}
This implies that $A$ and $(I+B)^{-\frac{1}{2}}(A+\lambda _i B)(I+B)^{-\frac{1}{2}}$ have the same number of eigenvalues that are smaller or respectively larger than $\lambda _i$. Since the eigenvalues of $(I+B)^{-\frac{1}{2}}(A+\lambda _i B)(I+B)^{-\frac{1}{2}}$ are the same as the eigenvalues of the generalized eigenvalue problem
\begin{align*}
 (A+\lambda _i B)u=\lambda  (I+B) u
\end{align*}
the claim is proven.
\end{proof}
We are now in a position to prove the main result.
\begin{theorem}\label{mainth}
Let $(i,j)$ be the input index of Algorithm \ref{algo} and let $(u,v,\lambda ,\mu)$ be a fixed point of Algorithm \ref{algo}. Then the output $(\lambda ,\mu )$ is the eigenvalue of problem \eqref{2paraev} with index $(i,j)$ in the sense of Defintion \ref{defindex} and  $u\otimes v$ is the corresponding eigenvector. 
\end{theorem}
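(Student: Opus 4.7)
The plan is to reduce the index condition to an application of Sylvester's law of inertia in the form prepared as Lemma~\ref{evlemma}. By Lemma~\ref{lem: fixed point} a fixed point $(u,v,\lambda,\mu)$ already satisfies $(A_1+\lambda B_1+\mu C_1)u=0$ and $(A_2+\lambda B_2+\mu C_2)v=0$, so $0$ is an eigenvalue of both symmetric matrices. What remains is to show that $0$ sits at position $i$ in the first spectrum and at position $j$ in the second; the uniqueness part of Theorem~\ref{thindex} will then close the argument.

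By symmetry I treat only the first matrix. Writing $A':=c_2 A_1-a_2 C_1$ and $B':=b_2 C_1-c_2 B_1$, the defining relation $\mu=-(a_2+\lambda b_2)/c_2$ produces the clean identity
\[
A'-\lambda B' \;=\; c_2\bigl(A_1+\lambda B_1+\mu C_1\bigr),
\]
so these two symmetric matrices differ only by the positive scalar $c_2$ and share the same inertia. Since $B'$ is positive definite by \ref{Ass2}, conjugation by $(B')^{-1/2}$ realises $A'-\lambda B'$ as congruent to the symmetric matrix $(B')^{-1/2}A'(B')^{-1/2}-\lambda I$, whose eigenvalues are $\nu_k-\lambda$ with the $\nu_k$ the eigenvalues of the generalized problem $A'u=\lambda B'u$ listed in increasing order. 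At the fixed point $\lambda$ is by construction of Algorithm~\ref{algo} the $i$-th smallest such $\nu_k$, i.e.\ $\lambda=\nu_i$, so $0$ is the $i$-th smallest eigenvalue of $(B')^{-1/2}A'(B')^{-1/2}-\lambda I$. Congruence and multiplication by $c_2>0$ preserve inertia and therefore the number of negative eigenvalues, which in turn pins the position of $0$; Lemma~\ref{evlemma} is precisely the packaging of this transfer.

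The analogous argument applied to the generalized problem solved for $v_k$ in Algorithm~\ref{algo}, with the positive definite matrices $C_2$ and $c_1 B_2-b_1 C_2$ playing the roles of $c_2$ and $B'$, shows that $0$ is the $j$-th smallest eigenvalue of $A_2+\lambda B_2+\mu C_2$. Thus $(\lambda,\mu)$ has index $(i,j)$ in the sense of Definition~\ref{defindex}, and Theorem~\ref{thindex} identifies it as the unique eigenvalue with that index, with eigenvector $u\otimes v$. The main obstacle is bookkeeping rather than substance: one must check that the combinatorial label "$i$-th smallest" survives each transformation (positive scaling, congruence by $(B')^{-1/2}$), and in the degenerate case that $0$ has higher multiplicity one relies on the fact that inertia alone determines the position of $0$ in a symmetric spectrum.
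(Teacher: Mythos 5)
Your argument is correct and is essentially the paper's own proof: the identity $A'-\lambda B'=c_2\bigl(A_1+\lambda B_1+\mu C_1\bigr)$ obtained from $\mu=-(a_2+\lambda b_2)/c_2$, followed by the inertia/congruence transfer (which is exactly what Lemma~\ref{evlemma} packages), and the symmetric argument for the second equation using that the two values of $\mu$ coincide by Lemma~\ref{lem: fixed point}. Only a cosmetic slip: in the second half the positive scalar playing the role of $c_2(v)$ is $-c_1(u)>0$, not the matrix $C_2$, but the argument goes through verbatim with that correction.
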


\begin{proof}
Let $(u,v)$ be a fixed point of Algorithm \ref{algo} and let $(\lambda ,\mu )$ be the corresponding eigenvalue, i.e., $\lambda $ is the $i$-th smallest eigenvalue of the generalized eigenvalue problem
\begin{align*}
     \left(c_2(v) A_1-a_2(v) C_1\right)u=\lambda  (b_2(v) C_1-c_2(v) B_1)u.
\end{align*}
The Assumption~\ref{Ass2} guarantees that $b_2(v) C_1-c_2(v) B_1$ is positive definite, we may therefore apply Lemma~\ref{evlemma}. 
It follows that $\lambda$ is the $i$-th smallest eigenvalue of the matrix 
\[
c_2(v) A_1-a_2(v) C_1 -\lambda  (b_2(v) C_1-c_2(v) B_1)+\lambda I_n=c_2(v)( A_1 +\lambda B_1 + \mu C_1)+\lambda I_n,
\]
where we substituted $a_2(v)=-\lambda  b_2(v)-\mu  c_2(v)$. This implies that $0$ is the $i$-th smallest eigenvalue of $A_1 +\lambda B_1 + \mu C_1$ since $c_2(v)>0$ is positive by Assumpotion~\ref{Ass2}.

Similarly, $\lambda $ is the $j$-th smallest eigenvalue of the generalized eigenvalue problem
\begin{align*}
     \left(a_1(u) C_2-c_1(u) A_2\right)v=\lambda  (c_1(u) B_2-b_1(u) C_2)v.
\end{align*}
We can again use Lemma~\ref{evlemma} to conclude that $\lambda$ is the $j$-th smallest eigenvalue of the matrix
\[
a_1(u) C_2-c_1(u) A_2-\lambda  (c_1(u) B_2-b_1(u) C_2)+\lambda I_m=-c_1(u)( A_2+\lambda B_2 + \mu C_2)+\lambda I_m,
\]
where we substituted  $a_1(u)=-\lambda  b_1(u)-\mu  c_1(u)$. Note that the values for $\mu$ coincide in a fixed point, as was shown in Lemma~\ref{lem: fixed point}. The Assumption~\ref{Ass2} implies that $c_1(u)<0$ and therefore $0$ is the $j$-th smallest eigenvalue of $A_2+\lambda B_2 + \mu C_2$.
\end{proof}
\begin{remark}
This result made use of the definiteness of $C_1$ and $C_2$. If instead $B_1$ and $B_2$ are definite one obtains a similar correspondence of input indices and indices of the eigenvalue when considering an alternating method resulting from the eigenvalue problem $(M_2+\mu M_0)(X)=0$ instead of $(M_1+\lambda M_0)(X)=0$.
\end{remark}

Theorem~\ref{mainth} implies that if Algorithm \ref{algo} converges, then it computes a solution to the two-parameter eigenvalue problem~\eqref{2paraev}.
\begin{corollary}\label{maincorollry}
Let $(u_k,v_k,\lambda_k)$ be a sequence generated by Algorithm \ref{algo}.  If $u_k$ and $v_k$ converge in the projective sense to $u$ and $v$, i.e., convergence is up to sign flip if we choose normalized $u_k$ and $v_k$ in each step,  then $\lambda_k$ converge to $\lambda $ and $u,v,\lambda $ are fixed by Algorithm~\ref{algo}. Therefore the output $(\lambda ,\mu )$ is the eigenvalue of problem \eqref{2paraev} with index $(i,j)$ in the sense of Defintion \ref{defindex} and $u\otimes v$ is the corresponding eigenvector. 
\end{corollary}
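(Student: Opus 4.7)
The plan is to pass to the limit in the defining equations of Algorithm \ref{algo} using continuity of all the relevant quantities. After normalizing each $u_k, v_k$ to unit length and invoking the hypothesized projective convergence (flipping signs as needed), one may assume $u_k \to u$ and $v_k \to v$ in the ordinary sense with unit norm. Since $a_1, b_1, c_1, a_2, b_2, c_2$ are continuous quadratic forms, the scalars determining each subproblem converge: $a_1(u_{k-1}) \to a_1(u)$, and similarly for all the others. Consequently, the two symmetric pencils assembled inside the loop converge entrywise to the pencils evaluated at $u$ and $v$; by Assumption \ref{Ass2}, the right-hand-side matrices $b_2(v_k) C_1 - c_2(v_k) B_1$ and $c_1(u_{k-1}) B_2 - b_1(u_{k-1}) C_2$ are positive definite and remain so in the limit.

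For symmetric-definite generalized eigenvalue problems, the eigenvalues ordered increasingly are continuous functions of the matrix entries. Therefore the $i$-th smallest eigenvalue $\lambda_k$ of the second subproblem of the loop converges to the $i$-th smallest eigenvalue $\lambda$ of the limiting pencil $(c_2(v) A_1 - a_2(v) C_1,\, b_2(v) C_1 - c_2(v) B_1)$. Passing to the limit in the eigenvector equation
\[
(c_2(v_k) A_1 - a_2(v_k) C_1)\, u_k = \lambda_k (b_2(v_k) C_1 - c_2(v_k) B_1)\, u_k
\]
and using $u_k \to u$ identifies $u$ as a corresponding eigenvector. An analogous argument applied to the first subproblem (with $u_{k-1} \to u$ driving $v_k \to v$) shows that $v$ is an eigenvector of $(a_1(u) C_2 - c_1(u) A_2,\, c_1(u) B_2 - b_1(u) C_2)$ associated with the $j$-th smallest eigenvalue $\tilde\lambda$. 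Thus $(u,v)$ simultaneously solves \eqref{ev1} and \eqref{ev2}, and Lemma \ref{lem: fixed point} forces $\tilde\lambda = \lambda$ and guarantees that $\mu := -(a_2(v) + \lambda b_2(v))/c_2(v)$ makes $(u,v,\lambda,\mu)$ a solution of \eqref{2paraev}.

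Finally, Theorem \ref{mainth} applies to the fixed point $(u,v,\lambda,\mu)$: since $\lambda$ is precisely the $i$-th smallest eigenvalue of \eqref{ev1} evaluated at $v$, and the $j$-th smallest of \eqref{ev2} evaluated at $u$, the theorem identifies $(\lambda,\mu)$ as the unique eigenvalue of index $(i,j)$ and $u \otimes v$ as the corresponding eigenvector. The main subtlety, which is essentially the only thing to track carefully, is the ordering of eigenvalues in the presence of possible multiplicities: continuity of the ordered eigenvalues of a symmetric-definite pencil is standard, and if the limiting $i$-th eigenvalue is multiple, the hypothesis $u_k \to u$ projectively already guarantees that $u$ lies in the corresponding eigenspace, so the index assignment is still valid. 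The same remark applies to the $j$-th eigenvalue on the $v$-side.
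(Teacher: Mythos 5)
Your argument is correct and follows essentially the same route as the paper: pass to the limit using continuity of the quadratic forms and of the ordered eigenvalues of the symmetric-definite pencils, identify $u$ and $v$ as eigenvectors of the limiting subproblems with the correct index, and invoke Theorem~\ref{mainth}. The only cosmetic difference is that the paper deduces the shared limit $\lambda$ via continuity of the Rayleigh quotient $\mathfrak{R}$, while you obtain it from eigenvalue continuity together with Lemma~\ref{lem: fixed point}; both are valid and equivalent in substance.
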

\begin{proof}
Let $X_{2k}=u_k\otimes v_k$ and $X_{2k+1}=u_{k+1}\otimes v_k$. Since $u_k$ and $v_k$ converge to $u$ and $v$ respectively $x_k$ converges to $u\otimes v$.
Define $\hat\lambda_k=\mathfrak{R}(X_{2k+1})$. Notice that $\lambda _k=\mathfrak{R}(X_{2k})$ is the eigenvalue corresponding to~\eqref{ev1} and $\hat\lambda_k$ is the eigenvalue corresponding to~\eqref{ev2} . By continuity of the Rayleigh quotient $\mathfrak{R}$, we get $\lim_{k\to\infty}\lambda _k=\lim_{k\to\infty}\hat\lambda _k=\lambda $. By continuity of the functions $a_1,b_1,c_1,a_2,b_2,c_2$ and continuity of eigenvalues of a nonsingular generalized eigenvalue problem, we get that $u$ and $v$ are eigenvectors of the eigenvalue problems \eqref{ev1} and \eqref{ev2} with eigenvalue $\lambda $ and continuity also ensures that the eigenvalue is still the $i$-th or respectively $j$-th largest. Hence, $(u,v,\lambda ,\mu )$ satisfies the conditions of Theorem~\ref{mainth}.
\end{proof}

\subsection{Geometric interpretation and rate of convergence}

Algorithm~\ref{algo} can be interpreted geometrically. For a given input index $(i,j)$ we look for the intersection of the curves
\begin{align*}
\gamma_i=\{(\lambda,\mu): \text{ $0$ is the $i$-th largest eigenvalue of }A_1+\lambda B_1 +\mu C_1 \},\\
\zeta_j=\{(\lambda,\mu): \text{ $0$ is the $j$-th largest eigenvalue of }A_2+\lambda B_2 +\mu C_2 \}.
\end{align*}
As a consequence of~\cite[Theorem~II.6.1]{Kato_Pert} and definiteness of $C_1$ and $C_2$, these curves are continuous and piecewise analytic, and the corresponding eigenvectors are also piecewise analytic. In a fashion similar to~\cite[Lemma~2.2]{Binding_89} we can derive the tangent line of $\gamma_i$ and $\zeta_j$ at an analytic point $(\lambda,\mu)$ with the corresponding eigenvectors. We can choose $\lambda$ as a local variable and obtain
\begin{align*}
(A_1 +\lambda B_1 +\mu(\lambda) C_1 )u(\lambda)=0
\end{align*}
which implies 
\begin{align*}
(A_1 +\lambda B_1 +\mu(\lambda) C_1 )u'(\lambda)=-(B_1 +\mu'(\lambda) C_1 )u(\lambda).
\end{align*}
Multiplying both equations to with $u(\lambda)^\top$ on the left results in
\begin{align}
a_1(u(\lambda)) +\lambda b_1(u(\lambda)) +\mu(\lambda) c_1(u(\lambda)) &=0,\nonumber\\
b_1(u(\lambda)) +\mu'(\lambda) c_1 (u(\lambda)&=0.\label{eq: eigencurve derivative}
\end{align}
The tangent line $T_\gamma$ of $\gamma_i$ at a point $(\lambda_0,\mu(\lambda_0))$ is therefore given by 
\[
T_\gamma=\{(\lambda,\mu):\mu {c_1(u(\lambda_0))}=-\big({a_1(u(\lambda_0))+\lambda b_1(u(\lambda_0))\big) }\}
\]
 and similarly the tangent line $T_\zeta$ of $\zeta_j$ at a point $(\lambda_0,\mu(\lambda_0))$ is given by 
 \[T_\zeta=\{(\lambda,\mu):\mu{c_2(v(\lambda_0)) =-\big({a_2(v(\lambda_0))+\lambda b_2(v(\lambda_0))\big) }}\},
 \]
 where $v(\lambda_0)$ is the corresponding eigenvector of $A_2+\lambda_0 B_2 +\mu(\lambda_0) C_2 $. Examining Algorithm~\ref{algo}, it can therefore be interpreted as follows:
\begin{enumerate}
\item Start at a point $(\lambda,\mu)$ of $\gamma_i$ and compute it's tangent line $T_\gamma$;
\item compute the intersection point $(\lambda,\mu)$ of $T_\gamma$ and $\zeta_j$  and compute the tangent line $T_\zeta$;
\item compute the intersection point $(\lambda,\mu)$ of $T_\zeta$ and $\gamma_i$  and compute the tangent line $T_\gamma$;
\item go to step 2.
\end{enumerate}
 We can use this interpretation to estimate the local rate of convergence. Notably, this interpretation is related to Newton's method for computing an intersection point of two functions $f$ and $g$. While Newton's method would compute a new iterate via
 \[
 f(x_k)-g(x_k)+(f'(x_k)-g'(x_k))(x_{k+1}-x_k)=0,
 \]
our method performs two steps via
\begin{align}
 f(x_k)+f'(x_k)(x_{k+1}-x_k)=g(x_{k+1})\quad \text{and}\quad g(x_{k+1})+g'(x_{k+1})(x_{k+2}-x_{k+1})=f(x_{k+2}). \label{quasinewton}
\end{align}

This allows us to prove local quadratic convergence in a similar fashion to a standard Newton's method.

\begin{proposition}\label{prop: local convergence}
Let $f,g:U\subset \R\to\R$ be differentiable with Lipschitz continuous  derivatives $f'$ and $g'$ and corresponding constants $\alpha$ and $\beta$. Furthermore, let $|f'(x)-g'(x)|\geq\gamma>0$ for all $x$. Then a sequence $x_k$ generated by~\eqref{quasinewton} satisfies
\begin{align*}
|\Delta x_k|(1-\frac{\beta}{2}|\Delta x_k|) &\leq \frac{\alpha}{2\gamma}|\Delta x_{k-1}|^2,\\
|\Delta x_{k+1}|(1-\frac{\alpha}{2}|\Delta x_{k+1}|) &\leq \frac{\beta}{2\gamma}|\Delta x_{k}|^2,
\end{align*}
where $\Delta x_k=x_{k+1}-x_k$.
\end{proposition}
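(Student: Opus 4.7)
The plan is to mimic the classical quadratic convergence proof for Newton's method, adapted to the alternating iteration \eqref{quasinewton}. By the symmetry $f \leftrightarrow g$, $\alpha \leftrightarrow \beta$, $k \mapsto k+1$, the second inequality follows from the first after relabeling, so I would only prove the first one and then invoke symmetry.

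Introduce the residual $h := f-g$. Starting from $f(x_k)+f'(x_k)\Delta x_k = g(x_{k+1})$, subtract $g(x_k)$ from both sides and Taylor-expand $g$ around $x_k$: by the Lipschitz bound on $g'$ one has $g(x_{k+1})-g(x_k) = g'(x_k)\Delta x_k + R$ with $|R|\leq \tfrac{\beta}{2}|\Delta x_k|^2$. Substituting, the linear-in-$f'$ contribution cancels against the tangent condition and there remains
\[
(f'(x_k)-g'(x_k))\Delta x_k \;=\; R - h(x_k).
\]
Taking absolute values, using the hypothesis $|f'(x_k)-g'(x_k)|\geq \gamma$ together with the triangle inequality, produces
\[
\gamma|\Delta x_k| \;\leq\; \tfrac{\beta}{2}|\Delta x_k|^2 + |h(x_k)|,
\]
which is $|\Delta x_k|\bigl(1-\tfrac{\beta}{2\gamma}|\Delta x_k|\bigr)\leq |h(x_k)|/\gamma$, matching the left-hand side of the first claimed inequality (up to the trivial absorption of $\gamma$ into the leading factor). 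To close the estimate I would bound $|h(x_k)|$ using the preceding update $g(x_{k-1})+g'(x_{k-1})\Delta x_{k-1} = f(x_k)$: the identity
\[
h(x_k) \;=\; f(x_k) - g(x_k) \;=\; \bigl[g(x_{k-1})+g'(x_{k-1})\Delta x_{k-1}\bigr] - g(x_k)
\]
exhibits $h(x_k)$ as minus a Taylor remainder, and Lipschitz continuity yields a bound of order $\tfrac{\alpha}{2}|\Delta x_{k-1}|^2$ once the Lipschitz constant is attributed to the derivative appropriate to the alternating step. Plugging this back into the display above gives the first inequality; the second follows immediately by swapping the roles of $f$ and $g$.

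The main technical care required is the bookkeeping of Lipschitz constants, namely making sure that the remainder in the Taylor expansion for the $k$-th step picks up $\beta$ while the remainder bounding $|h(x_k)|$ picks up $\alpha$, tracking the alternating structure of \eqref{quasinewton}. Apart from this, the argument is the usual Newton bound in one variable: the assumption $|f'-g'|\geq \gamma$ plays the role of the invertibility of the derivative of $h$ at a root, and the quadratic dependence on $|\Delta x_{k-1}|$ on the right-hand side is the standard second-order Taylor remainder. No global contraction argument is required, since the statement is purely local in character; local quadratic convergence of the iteration $x_k \to x^*$ then follows at once by showing that for $|\Delta x_k|$ sufficiently small the factor $1-\tfrac{\beta}{2}|\Delta x_k|$ can be bounded away from zero.
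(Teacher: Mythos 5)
Your outline follows the paper's proof almost step for step: expand the target curve of the current half-step to second order, use $|f'-g'|\ge\gamma$ to invert the linear term, and control the curve gap $h(x_k)=f(x_k)-g(x_k)$ through the preceding half-step (the paper does the same with explicit integral remainders, and likewise proves one inequality and says the other is analogous). The genuine gap sits exactly at the point you yourself flag as ``the main technical care'': the bound $|h(x_k)|\le\frac{\alpha}{2}|\Delta x_{k-1}|^2$ does not follow from the identity you invoke. From $g(x_{k-1})+g'(x_{k-1})\Delta x_{k-1}=f(x_k)$ you get $h(x_k)=-\bigl(g(x_k)-g(x_{k-1})-g'(x_{k-1})\Delta x_{k-1}\bigr)$, which is a Taylor remainder of $g$ over $[x_{k-1},x_k]$; Lipschitz continuity then gives $|h(x_k)|\le\frac{\beta}{2}|\Delta x_{k-1}|^2$, and no choice of ``attributing the Lipschitz constant to the appropriate derivative'' can make $\alpha$ appear, because $f$ simply does not occur in that remainder. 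With the identities you wrote, your argument closes, but it proves $|\Delta x_k|\bigl(1-\frac{\beta}{2\gamma}|\Delta x_k|\bigr)\le\frac{\beta}{2\gamma}|\Delta x_{k-1}|^2$, not the stated inequality with $\frac{\alpha}{2\gamma}$ on the right.

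The paper gets the mixed constants by expressing the preceding half-step as $g(x_k)=f(x_{k-1})+f'(x_{k-1})\Delta x_{k-1}$, so that $h(x_k)$ is exhibited as a Taylor remainder of $f$, bounded by $\frac{\alpha}{2}|\Delta x_{k-1}|^2$; matching which of the two relations in \eqref{quasinewton} governs the step $x_{k-1}\to x_k$ to the indexing in the statement is precisely the bookkeeping you deferred, and as written your choice is the one that does not deliver the claimed constants. Two smaller points: the ``trivial absorption of $\gamma$'' in fact produces the factor $1-\frac{\beta}{2\gamma}|\Delta x_k|$ rather than $1-\frac{\beta}{2}|\Delta x_k|$ (the paper's ``follows directly'' glosses over the same normalization), and the closing remarks about deducing local quadratic convergence are not needed, since the proposition only asserts the two inequalities.
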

\begin{proof}
We start with~\eqref{quasinewton} and with a Taylor expansion for $g$ we get
\begin{align*}
 f(x_k)+f'(x_k)\Delta x_k=g(x_{k+1})=g(x_k)+g'(x_k)\Delta x_k +\int_0^1 \big(g'(x_k+s\Delta x_k)-g'(x_k)\big) \Delta x_k ds.
\end{align*}
A  Taylor expansion of $f$ and $g(x_k)=f(x_{k-1})+f'(x_{k-1})\Delta x_{k-1}$ leads to
\[
f(x_k)=g(x_k)+\int_0^1 \big(f'(x_{k-1}+s\Delta x_{k-1})-f'(x_{k-1})\big) \Delta x_{k-1} ds.
\]
Combining both equations, we get
\[
\big(f'(x_k)-g'(x_k)\big)\Delta x_k =\int_0^1 \big(g'(x_k+s\Delta x_k)-g'(x_k)\big) \Delta x_k - \big(f'(x_{k-1}+s\Delta x_{k-1})-f'(x_{k-1})\big) \Delta x_{k-1} ds.
\]
Using Lipschitz continuity and $|f'(x)-g'(x)|>\gamma$, we arrive at
\[
\gamma |x_k|\leq \frac{\beta}{2}|x_k|^2+\frac{\alpha}{2}|x_{k-1}|^2.
\]
The first inequality follows directly, and the second one can be derived analogously.
\end{proof}
  We can apply this result, when the intersection point of $\gamma_i$ and $\zeta_j$ is an analytic point of both curves. Then we can describe both locally via smooth functions $\mu=f(\lambda)$ for $\gamma_i$ and  $\mu=g(\lambda)$ for $\zeta_j$. The difference $|f'-g'|$ can be bounded with~\eqref{eq: eigencurve derivative}. We get $|f'-g'|\geq |\frac{b_1(u)}{c_1(u)}-\frac{b_2(v)}{c_2(v)}|>0$ by the definiteness assumption~\ref{Ass2}.
  
\subsection{Global convergence for extremal eigenpairs}

So far, we have shown that if the algorithm converges, it returns the eigenvalue of the two-parameter eigenvalue problem with the desired index.  For extremal indices,  we can actually prove global convergence. 

\begin{theorem}\label{convtheorem}
The sequences $u_k$ and $v_k$ generated by Algorithm \ref{algo} converge (up to normalization and sign flip) to a solution of~\eqref{2paraev} if the input index is either $(1,1)$ or $(n,m)$ and under the assumption that the respective eigenvalue is simple.
\end{theorem}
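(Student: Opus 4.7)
The plan is to interpret Algorithm~\ref{algo} with input index $(n,m)$ as alternating maximisation of the rank-one Rayleigh quotient $\mathfrak{R}(u\otimes v)$ from~\eqref{rquotient}; the case $(1,1)$ is the obvious dual, replacing ``largest'' by ``smallest'' and monotone increase by monotone decrease.

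First I would establish monotonicity. The choice of $v_k$ as the largest eigenvector of~\eqref{ev2} at $u_{k-1}$ maximises $\mathfrak{R}(u_{k-1}\otimes v)$ over $v$, and the choice of $u_k$ as the largest eigenvector of~\eqref{ev1} at $v_k$ maximises $\mathfrak{R}(u\otimes v_k)$ over $u$, so
\[
\mathfrak{R}(u_{k-1}\otimes v_{k-1}) \le \mathfrak{R}(u_{k-1}\otimes v_k) \le \mathfrak{R}(u_k\otimes v_k).
\]
Since $\mathfrak{R}$ is bounded above by the largest eigenvalue of~\eqref{2meigenvalue}, the sequence $\mathfrak{R}(u_k\otimes v_k)$ converges to some $\lambda^\star$. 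I then normalise $u_k,v_k$ on the respective unit spheres ($\mathfrak{R}$ is scale invariant) and by compactness extract a subsequence along which $(u_{k_\ell-1},u_{k_\ell},v_{k_\ell})\to(\tilde u,u^\star,v^\star)$. Continuity of the coefficients $a_i(\cdot),b_i(\cdot),c_i(\cdot)$ and of the Rayleigh quotient yields $\mathfrak{R}(\tilde u\otimes v^\star)=\mathfrak{R}(u^\star\otimes v^\star)=\lambda^\star$; $v^\star$ is a largest eigenvector of~\eqref{ev2} at $\tilde u$ and $u^\star$ is a largest eigenvector of~\eqref{ev1} at $v^\star$; and, crucially, since $\tilde u$ attains $\lambda^\star=\max_u\mathfrak{R}(u\otimes v^\star)$, the vector $\tilde u$ is also a largest eigenvector of~\eqref{ev1} at $v^\star$. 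Thus $(\tilde u,v^\star)$ simultaneously solves~\eqref{ev1} and~\eqref{ev2} with the top indices $n$ and $m$.

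Theorem~\ref{mainth} applied to the fixed point $(\tilde u,v^\star)$ then identifies the corresponding two-parameter eigenvalue as the one of index $(n,m)$, so $\lambda^\star=\lambda^{(n,m)}$. Simplicity of $\lambda^{(n,m)}$ means that $0$ is a simple eigenvalue of both $A_1+\lambda^{(n,m)}B_1+\mu^{(n,m)}C_1$ and $A_2+\lambda^{(n,m)}B_2+\mu^{(n,m)}C_2$, and Lemma~\ref{evlemma} translates this into simplicity of the largest eigenvalues of~\eqref{ev1} at $v^\star$ and of~\eqref{ev2} at $u^\star$. Consequently $\tilde u=\pm u^\star$, and the pair $(u^\star,v^\star)$ is uniquely determined up to simultaneous sign flip. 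Every subsequential limit of the normalised iterates coincides with this pair, which is exactly projective convergence of the full sequence; Corollary~\ref{maincorollry} then delivers $\lambda_k\to\lambda^{(n,m)}$ as well.

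The main obstacle I anticipate is the penultimate step: ensuring that the limit pair $(\tilde u,v^\star)$ carries the correct top-index data in both sub-problems without circular use of the simplicity hypothesis, since \emph{a priori} $\tilde u$ and $u^\star$ need not agree and the largest eigenvalues of the sub-problems along the iteration need not be simple. The decisive observation is the chain of equalities $\mathfrak{R}(\tilde u\otimes v^\star)=\mathfrak{R}(u^\star\otimes v^\star)=\max_u\mathfrak{R}(u\otimes v^\star)$, which forces both $\tilde u$ and $u^\star$ to be genuine maximisers, so Theorem~\ref{mainth} can be applied to $(\tilde u,v^\star)$ before any simplicity argument, and simplicity is invoked only at the end to collapse $\tilde u$ onto $u^\star$ and pin down the limit.
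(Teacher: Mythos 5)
Your proof is correct, but it follows a genuinely different route from the paper. You run the classical block-coordinate (alternating maximisation) compactness argument: monotonicity and boundedness of $\mathfrak{R}(u_k\otimes v_k)$, extraction of a convergent subsequence of the normalised triple $(u_{k-1},u_k,v_k)$, the observation that the limit pair $(\tilde u,v^\star)$ is coordinatewise maximal in both sub-problems (your chain $\mathfrak{R}(\tilde u\otimes v^\star)=\mathfrak{R}(u^\star\otimes v^\star)=\max_u\mathfrak{R}(u\otimes v^\star)$ is the right way to avoid circularity), hence a fixed point in the sense of Theorem~\ref{mainth}, whose index is then pinned down by Theorem~\ref{mainth} and Theorem~\ref{thindex}, with simplicity used only at the end to force uniqueness of the accumulation point up to sign. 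The paper instead argues through the gradient of the Rayleigh quotient on the rank-one manifold: it shows that each half-step of the algorithm does at least as well as an exact line search along $\nabla\mathfrak{R}$, because the gradient lies in the subspace $\{u\otimes y\}$ (resp.\ $\{x\otimes v\}$) after each half-step, and then invokes the Zoutendijk-type Lemma~\ref{gradlemma} to conclude $\nabla\mathfrak{R}(X_k)\to 0$, so every limit point is an eigenvector of the $nm\times nm$ problem \eqref{2meigenvalue}, with the index identified via Corollary~\ref{maincorollry} and uniqueness again from simplicity. Your argument is more elementary in that it needs neither the tangent-space geometry nor Lemma~\ref{gradlemma}, and it identifies the extremal index directly from coordinatewise optimality; the paper's gradient formulation buys a statement (``limit points are critical points'') that is the natural starting point for analysing non-extremal indices, which is why it is set up that way. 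Two small repairs to make in your write-up: the simplicity of the sub-problem eigenvalues concerns \eqref{ev2} at $\tilde u$ (not at $u^\star$, which is only identified with $\tilde u$ afterwards), and Lemma~\ref{evlemma} as stated orders eigenvalues rather than counting multiplicities, so you should note that the congruence in its proof also preserves the dimension of the kernel, which is what converts simplicity of $0$ as an eigenvalue of $A_1+\lambda B_1+\mu C_1$ into simplicity of the top eigenvalue of \eqref{ev1} at $v^\star$.
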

We use the following lemma for proving convergence when each step is at least as good as a line search.
\begin{lemma}\label{gradlemma}
Let $f:D\to\R$ be continuously differentiable, where $D\subset\R^n$ is open. Let $\{x_k\}_{k\in \mathbb{N}}\subset K\subset D$ and $g_k=\nabla f(x_k)$ satisfy
\begin{align*}
f(x_{k+1})\leq f(x_k-\sigma g_k)
\end{align*} 
for every $\sigma\in\R$ such that $x_k-\sigma g_k\in D$, where $K$ is compact. Then $g_k$ converges to zero as $k\to\infty$.
\end{lemma}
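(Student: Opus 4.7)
The plan is a classical argument in the spirit of convergence proofs for gradient methods with a line search. First I would extract a descent property from the assumption by specializing to $\sigma=0$: this yields $f(x_{k+1})\le f(x_k)$, so the sequence $\{f(x_k)\}$ is monotone nonincreasing. Since $\{x_k\}$ lies in the compact set $K$ on which $f$ is continuous, $f$ is bounded below on $K$, and hence $f(x_k)$ converges. In particular, $f(x_k)-f(x_{k+1})\to 0$, which will be the quantity used to force a contradiction.

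Next, assume toward contradiction that $g_k$ does not converge to $0$. Then there exist $\varepsilon>0$ and a subsequence $\{k_j\}$ with $\|g_{k_j}\|\ge\varepsilon$. Because $K$ is compact and $D$ is open, the distance $d:=\mathrm{dist}(K,\R^n\setminus D)$ is strictly positive (set $d=+\infty$ when $D=\R^n$). Since $\nabla f$ is continuous on $K$, the gradients $g_k$ are uniformly bounded, say $\|g_k\|\le M$. The fattened set $K_{d/2}:=\{x:\mathrm{dist}(x,K)\le d/2\}$ is a compact subset of $D$, on which $\nabla f$ is uniformly continuous. Therefore, given $\varepsilon/2>0$, one can choose $\sigma_0\in(0,d/(2M))$ small enough that $\|\nabla f(x_k-t g_k)-\nabla f(x_k)\|\le\varepsilon/2$ for every $k$ and every $t\in[0,\sigma_0]$, and additionally $x_k-tg_k\in D$ along this entire segment.

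Using the fundamental theorem of calculus along the segment $t\mapsto x_{k_j}-tg_{k_j}$,
\begin{align*}
f(x_{k_j}-\sigma_0 g_{k_j})
&=f(x_{k_j})-\int_0^{\sigma_0}\langle\nabla f(x_{k_j}-t g_{k_j}),g_{k_j}\rangle\,dt\\
&\le f(x_{k_j})-\sigma_0\|g_{k_j}\|^2+\sigma_0\tfrac{\varepsilon}{2}\|g_{k_j}\|\\
&\le f(x_{k_j})-\sigma_0\|g_{k_j}\|\bigl(\|g_{k_j}\|-\tfrac{\varepsilon}{2}\bigr)
\le f(x_{k_j})-\tfrac{\sigma_0\varepsilon^2}{2}.
\end{align*}
Combining with the hypothesis $f(x_{k_j+1})\le f(x_{k_j}-\sigma_0 g_{k_j})$ yields $f(x_{k_j})-f(x_{k_j+1})\ge \sigma_0\varepsilon^2/2>0$ for all $j$, contradicting $f(x_k)-f(x_{k+1})\to 0$. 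Hence $g_k\to 0$.

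The main technical obstacle is the interplay between the open domain $D$ and the admissibility condition $x_k-\sigma g_k\in D$: one must choose a single $\sigma_0>0$ that is simultaneously small enough to keep all segments inside $D$ and inside a compact set where $\nabla f$ is uniformly continuous. This is precisely where the compactness of $K$ together with the openness of $D$ is used; everything else is a standard descent/Taylor estimate.
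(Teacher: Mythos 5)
Your proof is correct and follows essentially the same route as the paper's: monotonicity of $f(x_k)$ from $\sigma=0$, boundedness on the compact set, a uniform step size obtained from uniform continuity of $\nabla f$ on a compact fattening of $K$, and a uniform-decrease estimate (you via the fundamental theorem of calculus, the paper via the mean value theorem) yielding a contradiction with $f(x_k)-f(x_{k+1})\to 0$ (the paper sums the telescoping series instead, which is equivalent). The only cosmetic point is that when $D=\R^n$ your set $K_{d/2}$ with $d=+\infty$ is not compact, so one should fatten by a fixed finite amount, e.g.\ replace $d$ by $\min(d,1)$; this does not affect the argument.
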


\begin{proof}[Proof of Lemma~\ref{gradlemma}]
First note that $ f(x_{k+1})\leq f(x_k-\sigma g_k)$ for all $\sigma$ implies that $f(x_k)$ is monotonically nonincreasing and $\{x_k\}_{k\in \mathbb{N}}\subset K$ implies that $f(x_k)$ is bounded from below. Hence, $f(x_k)$ converges to some value $\tilde{f}$ and since $x_k$ lies in a compact set, the gradients $g_k$ are bounded as well.

Now towards a contradiction assume that $\|g_k\|\geq 2\epsilon> 0$ for some $\epsilon > 0$ and every $k$. Else either $g_k$ converges to zero or we can choose a subsequence $g_{k'}$ such that $\|g_{k'}\|\geq 2\epsilon> 0$. Since $g_k$ are bounded, there exists $\delta_1>0$ such that $x_k-\sigma g_k\in \tilde{K}\subset D$ for every $\sigma\in [0,\delta_1]$, where $\tilde{K}$ is also compact. Since $f$ is continuously differentiable, $\nabla f$ is uniformly continuous on $\tilde{K}$. Therefore, there is $\delta_2 \in (0, \delta_1]$ such that
\begin{align*}
\|\nabla f(x_k-\xi g_k)-g_k\|<\epsilon.
\end{align*}
for all $\xi \in [0,\delta_2]$. By the mean value theorem, Cauchy-Schwarz inequality, and the assumption $\|g_k\|\geq2\epsilon$, we have
\begin{align*}
f(x_k) - f(x_k-\delta_2 g_k)&=\delta_2\nabla f(x_k-\xi g_k)^\top g_k\\
&\geq\delta_2 \|g_k\|^2-\delta_2\|\nabla f(x_k-\xi g_k) - g_k\|\|g_k\|\\
&\geq\delta_2\|g_k\|^2-\delta_2\|g_k\|\epsilon\\
&\geq \frac{\delta_2}{2} \| g_k \|^2 \geq2\delta_2\epsilon^2.
\end{align*}
This yields the contradiction
\begin{align*}
\infty>f(x_0)-\tilde{f}=\sum_{k=0}^\infty f(x_k)-f(x_{k+1})\geq\sum_{k=0}^\infty f(x_k)-f(x_k-\delta_2 g_k)\geq  \sum_{k=0}^\infty 2\delta_2\epsilon^2=\infty,
\end{align*}
therefore $g_k$ converges to zero.
\end{proof}

\begin{proof}[Proof of Theorem~\ref{convtheorem}] 
Without loss of generality, assume that $C_1=-I_{n}$ and $C_2=I_{m}$. Finding the vectors $u$ and $v$ for the indices  $(1,1)$ and $(n,m)$ corresponds to either minimizing or maximizing the Rayleigh quotient
\begin{align*}
    \mathfrak{R}(X)=-\frac{\langle X,M_1(X) \rangle}{\langle X,M_0(X) \rangle}.
\end{align*}
Its gradient is given by
\begin{align*}
    \nabla \mathfrak{R}(X)=-\frac{2}{\langle X,M_0(X)\rangle}\left( M_1(X)+\mathfrak{R}(X)M_0(X)  \right)
\end{align*}
and if $X=u\otimes v$, then since $C_1=-I_{n}$ and $C_2=I_{m}$, we have
\begin{align*}
    \nabla \mathfrak{R}(u\otimes v)=-\frac{2}{\langle u\otimes v,M_0(
u\otimes v)\rangle}\left( u\otimes (A_2+\mathfrak{R}(u\otimes v)B_2)v+(A_1+\mathfrak{R}(u\otimes v)B_1)u\otimes v  \right).
\end{align*}
This shows that the gradient is an element in the tangent space  of the rank-one matrix manifold at $u\otimes v$, which is given by
\begin{align*}
    T_{u\otimes v}\mathcal{M}_1=\{x\otimes v+u\otimes y:x\in \R^{n},y\in \R^{m}\}.
\end{align*}
Now assume every iterate of $u$ and $v$ are unit vectors. Then $-c_1(u)=1=c_2(v)$ and the Rayleigh quotient reads
\begin{align*}
    \mathfrak{R}(u\otimes v)=\frac{a_1(u)+a_2(v)}{b_1(u)+b_2(v)}.
\end{align*}
Now let $(\lambda ,v)$ be an eigenpair of
\begin{align*}
            \left(a_1(u)I_{m} +A_2\right)v=-\lambda  (b_1(u)I_{m}+ B_2)v,
\end{align*}
which is one step of Algorithm \ref{algo}. It follows that $\lambda =\mathfrak{R}(u\otimes v)$ and
\begin{align*}
   -  \left(A_2 +\lambda  B_2\right)v=(a_1(u)+\lambda  b_1(u))v.
\end{align*}
The gradient is then 
\begin{align*}
    \nabla \mathfrak{R}(u\otimes v)=\frac{2}{\langle u\otimes v,M_0(
u\otimes v)\rangle}\left(-u\otimes (a_1(u)+\lambda  b_1(u))v+(A_1+\lambda  B_1)u\otimes v  \right),
\end{align*}
and is orthogonal to the subspace $\{u\otimes y:y\in\R^{m}\}\subset  T_{u\otimes v}\mathcal{M}_1$ as 
\begin{align*}
   \langle \nabla \mathfrak{R}(u\otimes v),u\otimes y\rangle=\frac{2}{\langle u\otimes v,M_0(
u\otimes v)\rangle}\left(-  (a_1(u)+\lambda  b_1(u))v^\top y+(a_1(u)+\lambda  b_1(u)) v^\top y  \right)=0.
\end{align*}
This was to be expected, as in each step we optimize the Rayleigh quotient with respect to the subspace $\{u\otimes y:y\in\R^{m}\}$ or respectively $\{x\otimes v:x\in\R^{n}\}$, which makes the gradient orthogonal to that respective space. This, together with $\nabla \mathfrak{R}(u\otimes v)\in T_{u\otimes v}\mathcal{M}_1 $, implies that the gradient lies in $\{u\otimes y:y\in\R^{m}\}$ or respectively $\{x\otimes v:x\in\R^{n}\}$ after each half step.

We now only consider the case of minimizing the Rayleigh quotient, as maximizing can be done analogously. As we can choose our iterates $u_k$ and $v_k$ to be normalized, the iterates $X_k=u_k\otimes v_k$ lie in a compact set. As $$v_{k+1}=\argmin_{\|v\|=1} \mathfrak{R}(u_k\otimes v),$$
we have $\mathfrak{R}(X_{k+1})\leq \mathfrak{R}(X_k-\sigma \nabla \mathfrak{R}(X_k))$ since $X_k-\sigma \nabla \mathfrak{R}(X_k)$ is of the form $u_k\otimes v$, and the $\mathfrak{R}$ is scaling invariant. We can thus use Lemma~\ref{gradlemma} and therefore $\nabla \mathfrak{R}(X^k)$ converges to zero, implying that any convergent subsequence of $X_k$ converges to an eigenvector of $M_1(X)+\lambda  M_0(X)=0$. Corollary~\ref{maincorollry} implies that this eigenvector corresponds to the eigenvalue of the correct index. Hence, since this eigenvalue is simple, the sequence $X_k$ only has one accumulation point up to sign flip.
\end{proof}

\subsection{Complexity}
We discuss time complexity of Algorithm~\ref{algo}. We assume $m=n$. Computing the eigendecomposition of a generalized eigenvalue problem needs $O(n^{\omega+\gamma})\subset o(n^3)$ operations, where $\omega$ is the exponent of complexity of matrix multiplication and $\gamma>0$~\cite{Demmel_Fast_2007}. This implies that the complexity for computing one eigenvalue of the two-parameter eigenvalue problem is $o(n^3 k)$, where $k$ is the number of iterations in Algorithm~\ref{algo} until a sufficient accuracy is achieved (empirically we observe $k=O(1)$). When computing all $n^2$ eigenvalues of the two-parameter eigenvalue problem, we therefore get a complexity of $o(n^5 k)$ (and with $k=O(1)$ we get a complexity of $o(n^5)$). However, a full eigenvalue decomposition is not always necessary. 
For extremal eigenvalues, we only need to compute the eigenvector corresponding to the largest or smallest eigenvalue, which is typically possible in $O(n^2)$ operations, for example using Lanczos or LOBPCG~\cite{lobpcg}. Similarly,  the $i$-th largest eigenvalue can often be computed in $O(n^2i)$ operations. In summary, computing an eigenvalue with index $(i,j)$ needs $O(n^2k\max(\min(i,n-i+1),\min(j,n-j+1))$ operations, where $k$ is the number of iterations necessary for the desired accuracy.
If the matrices allow for fast matrix vector multiplication, for example if they are sparse with $O(n)$ nonzero entries, the complexity can be reduced further up to $O(nk\max(\min(i,n-i+1),\min(j,n-j+1))$ operations.

\section{A class of PDE eigenvalue problems}\label{examples}

We now present a class of PDE eigenvalue problems, which can be separated into an appropriate two-parameter eigenvalue problem. Let $\phi:(a,b)\times(c,d) \to U\subset \R^2$ be a diffeomorphism, with
\begin{align*}
g(x,y):=(D\phi(x,y))^\top D\phi(x,y)=\begin{pmatrix}
g_1(x)+g_2(y)&0\\
0&g_1(x)+g_2(y)
\end{pmatrix},
\end{align*}and choose $g_1,g_2 > 0$. This is always possible for such a function $g$ as $g(x,y)$ is positive definite and therefore $g_1(x)+g_2(y)> 0$. Now let $\Omega=\phi((a,b)\times(c,d))$. We consider the Helmholtz equation
\begin{align*}
    \Delta u+\lambda  u=0 \quad&\text{on } \Omega, \\
    u=0 \quad&\text{at } \partial\Omega.
\end{align*}
We can now write $\Delta u$ in coordinates given by $\phi$. Then 
\begin{align*}
    (\Delta u)(\phi(x,y))=&\frac{1}{\sqrt{\det(g(x,y))}}\nabla\cdot\left(\sqrt{\det(g(x,y))}g^{-1}(x,y) \nabla(u(\phi(x,y))\right)\\
    =&\frac{1}{g_1(x)+g_2(y)}\Delta(u(\phi(x,y))).
\end{align*}

Making an ansatz $u(x,y)=v(x)w(y)$, we get 
\begin{align*}
 0&=   v''(x)w(y)+v(x)w''(y)+\lambda  g_1(x)v(x)w(y)+\lambda  g_2(y)v(x)w(y)\\
 &=v(x)\big(w''(y)   +\lambda  g_1(x)w(y)-\mu w(y) \big)+w(y)\big(v''(x)+\lambda  g_2(y)v(x)+\mu v(x)\big).
\end{align*}
Now let $v$ and $w$ satisfy
\begin{align}\label{pde2para}
    v''(x)+\lambda  g_1(x)v(x)+\mu v(x)&=0\\
    w''(y)+\lambda  g_2(y)w(y)-\mu w(y)&=0,
\end{align}
and $0=v(a)=v(b)=w(c)=w(d)$. Then $u(x,y)=v(x)w(y)$ solves the original eigenvalue problem, and on a given discretrization \eqref{pde2para} satisfies \ref{Ass1} and \ref{Ass2}.
\begin{example}
The complex function $\phi:\mathbb{C}\to \mathbb{C},z\mapsto z^2$. Then $D\phi(z)=2z$. Therefore with $z=x+iy$, we have 
\begin{align*}
    g(x,y)=\begin{pmatrix}2x&2y\\-2y&2x\end{pmatrix}\begin{pmatrix}2x&-2y\\2y&2x\end{pmatrix}=\begin{pmatrix}4x^2+4y^2&0\\0&4x^2+4y^2\end{pmatrix}.
\end{align*}
\end{example}
\begin{example}
The complex function $\phi:\mathbb{C}\to \mathbb{C},z\mapsto e^z$. Then $D\phi(z)=e^z$. Therefore with $z=x+iy$, we have 
\begin{align*}
    g(x,y)=\begin{pmatrix}e^x \cos(y)&e^x \sin(y)\\-e^x \sin(y)&e^x \cos(y)\end{pmatrix}\begin{pmatrix}e^x \cos(y)&-e^x \sin(y)\\e^x \sin(y)&e^x \cos(y)\end{pmatrix}=\begin{pmatrix}e^{2x}&0\\0&e^{2x}\end{pmatrix}.
\end{align*}
These coordinates describe an annulus.
\end{example}
\begin{example}
The complex function $\phi:\mathbb{C}\to \mathbb{C},z\mapsto \cosh(z)$. Then $D\phi(z)=\sinh(z)$. Therefore with $z=x+iy$, we have 
\begin{align*}
    g(x,y)=&\begin{pmatrix}\sinh(x) \cos(y)&\cosh(x) \sin(y)\\-\cosh(x) \sin(y)&\sinh(x) \cos(y)\end{pmatrix}\begin{pmatrix}\sinh(x) \cos(y)&-\cosh(x) \sin(y)\\\cosh(x) \sin(y)&\sinh(x) \cos(y)\end{pmatrix}\\
    =&\begin{pmatrix}\sinh^2(x)+\sin^2(y)&0\\0&\sinh^2(x)+\sin^2(y)\end{pmatrix}.
\end{align*}
These are elliptical coordinates.
\end{example}

\section{Numerical Experiments}  \label{Sec: Numerics}

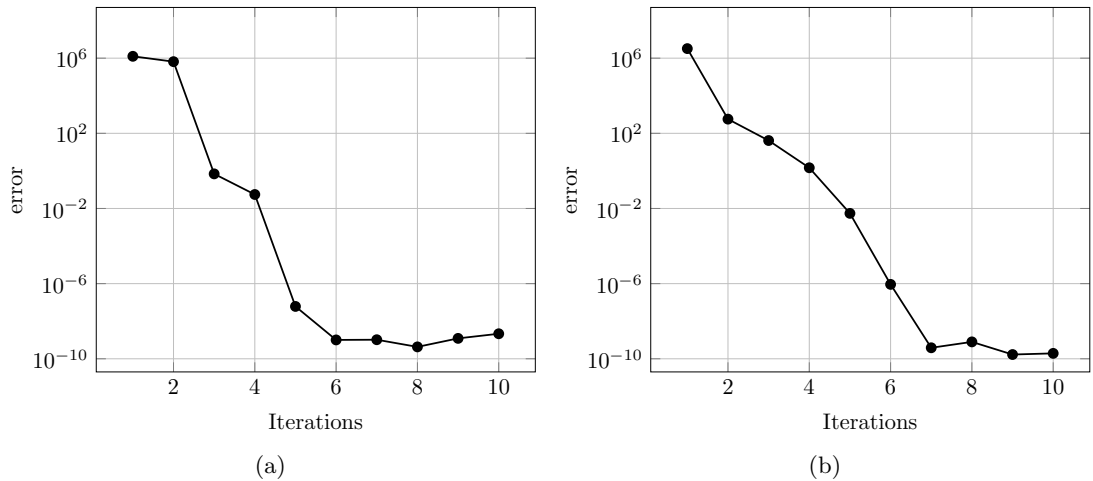
\begin{figure}[t]
\begin{subfigure}{0.48\textwidth}
\resizebox{\textwidth}{!}{
\begin{tikzpicture}
\begin{semilogyaxis}[ymin=2*10^-11,ymax=5*10^8,grid=major,xlabel=Iterations,ylabel=error
] 
\addplot[mark=*,,line width=0.8pt
] file[] {data1000gradrand.txt};
\end{semilogyaxis}
\end{tikzpicture}
}
\caption{}\label{grad1000rand}
\end{subfigure}
\begin{subfigure}{0.48\textwidth}
\resizebox{\textwidth}{!}{
\begin{tikzpicture}
\begin{semilogyaxis}[ymin=2*10^-11,ymax=5*10^8,grid=major,xlabel=Iterations,ylabel=error
]
 
\addplot[mark=*,line width=0.8pt
] file[] {data1000grad.txt};
\end{semilogyaxis}
\end{tikzpicture}
}
\caption{}\label{grad1000}
\end{subfigure}
\caption{Testing Algorithm~\ref{algo} with input index $(1,1)$. Figure~\subref{grad1000rand} depicts results for randomly generated $1000\times 1000$ matrices fulfilling Assumptions~\ref{Ass1} and \ref{Ass2}\ and Figure~\subref{grad1000} depicts results for a discretization of \eqref{ex: elliptic} on {a} $1000\times 1000$ grid. }\label{gradtest}
\end{figure}

%

\begin{figure}[t]
         \begin{subfigure}[b]{0.33\textwidth}
        \centering
        \includegraphics[width= \textwidth]{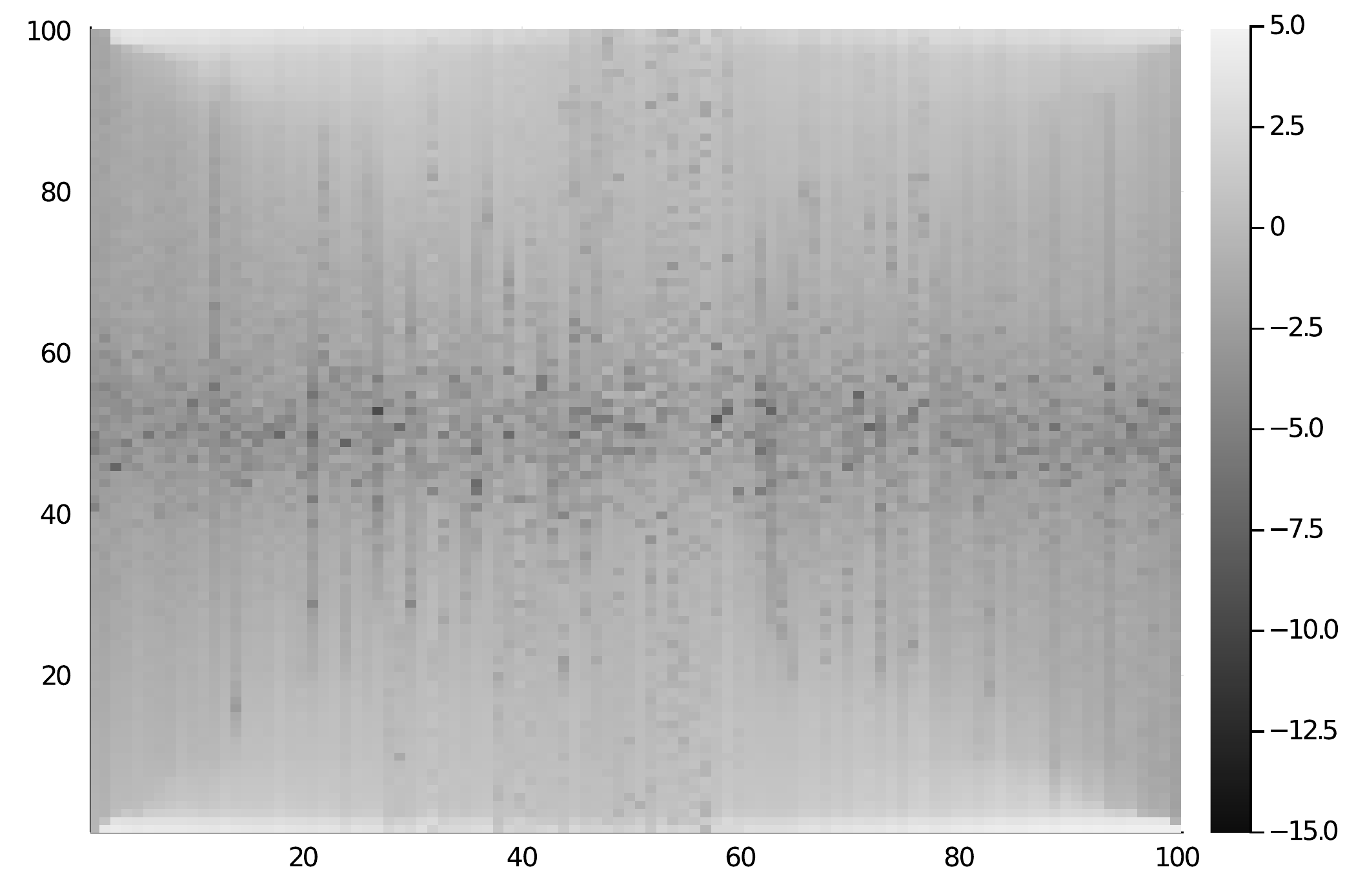}
        \caption{Two iterations}
    \end{subfigure}
    \begin{subfigure}[b]{0.33\textwidth}
        \centering
        \includegraphics[width=\textwidth]{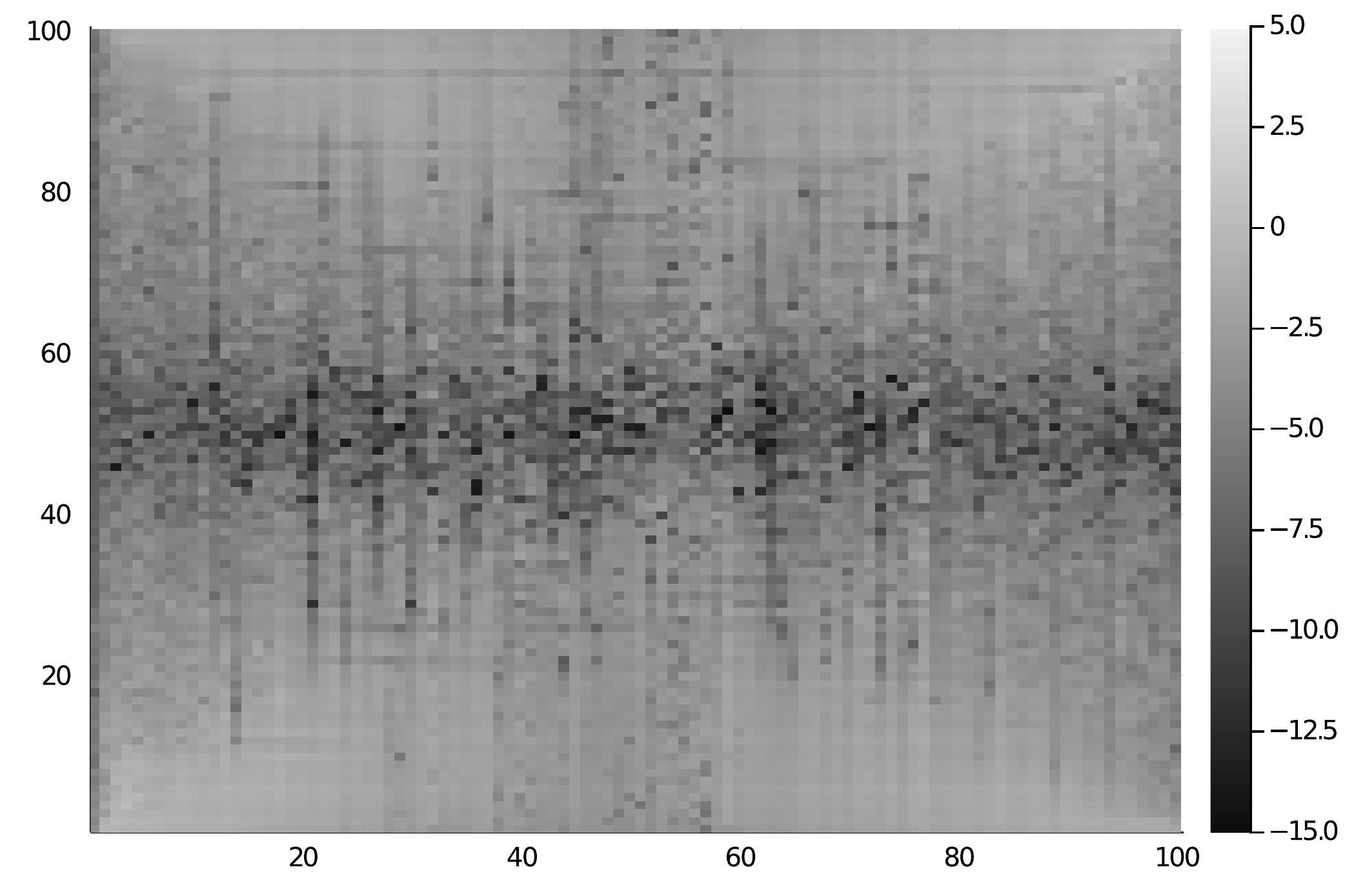}
        \caption{Three iterations}
    \end{subfigure}
     \begin{subfigure}[b]{0.33\textwidth}
        \centering
        \includegraphics[width= \textwidth]{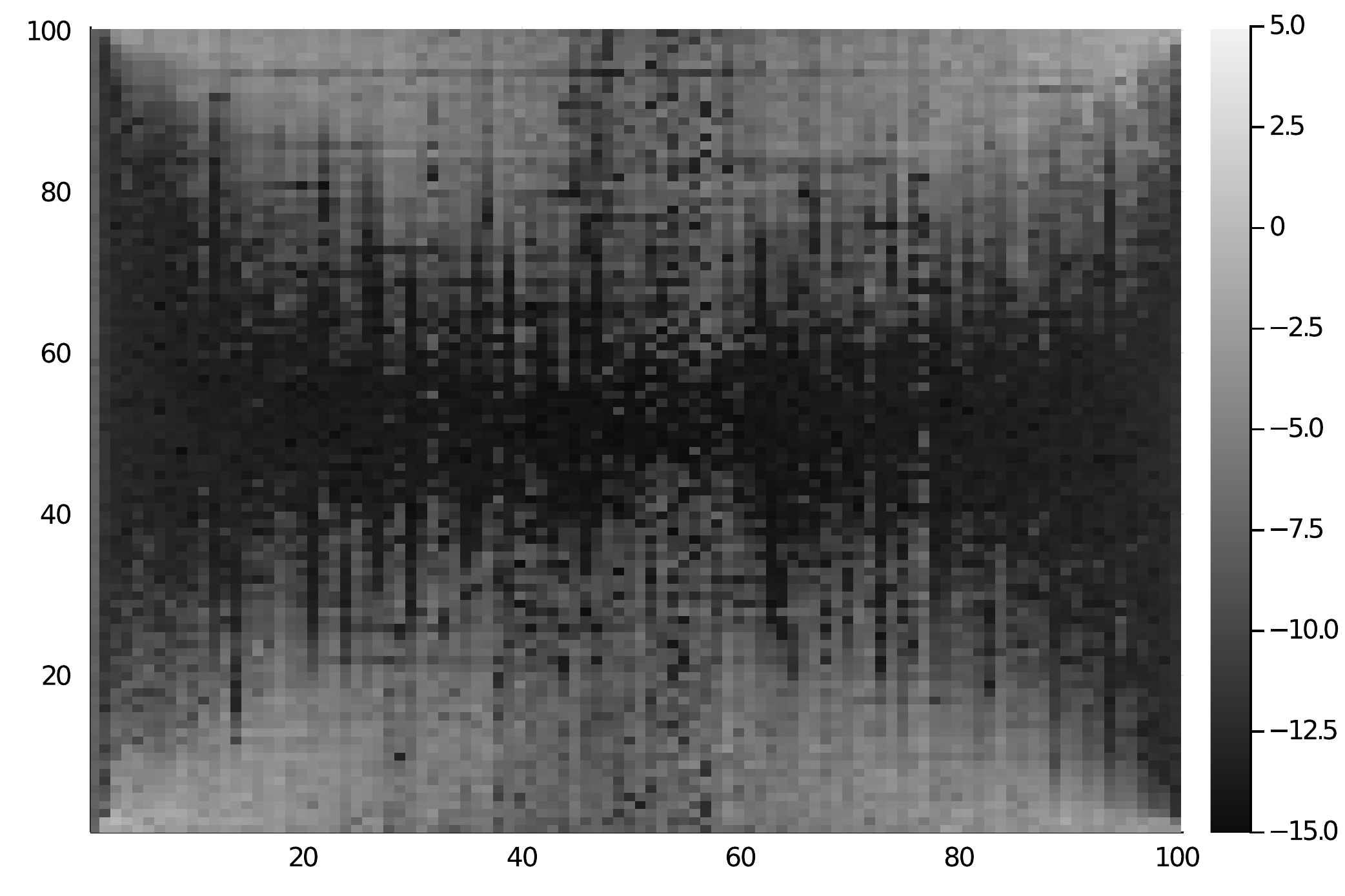}
        \caption{Four iterations}
    \end{subfigure}

     \begin{subfigure}[b]{0.33\textwidth}
        \centering
        \includegraphics[width=\textwidth]{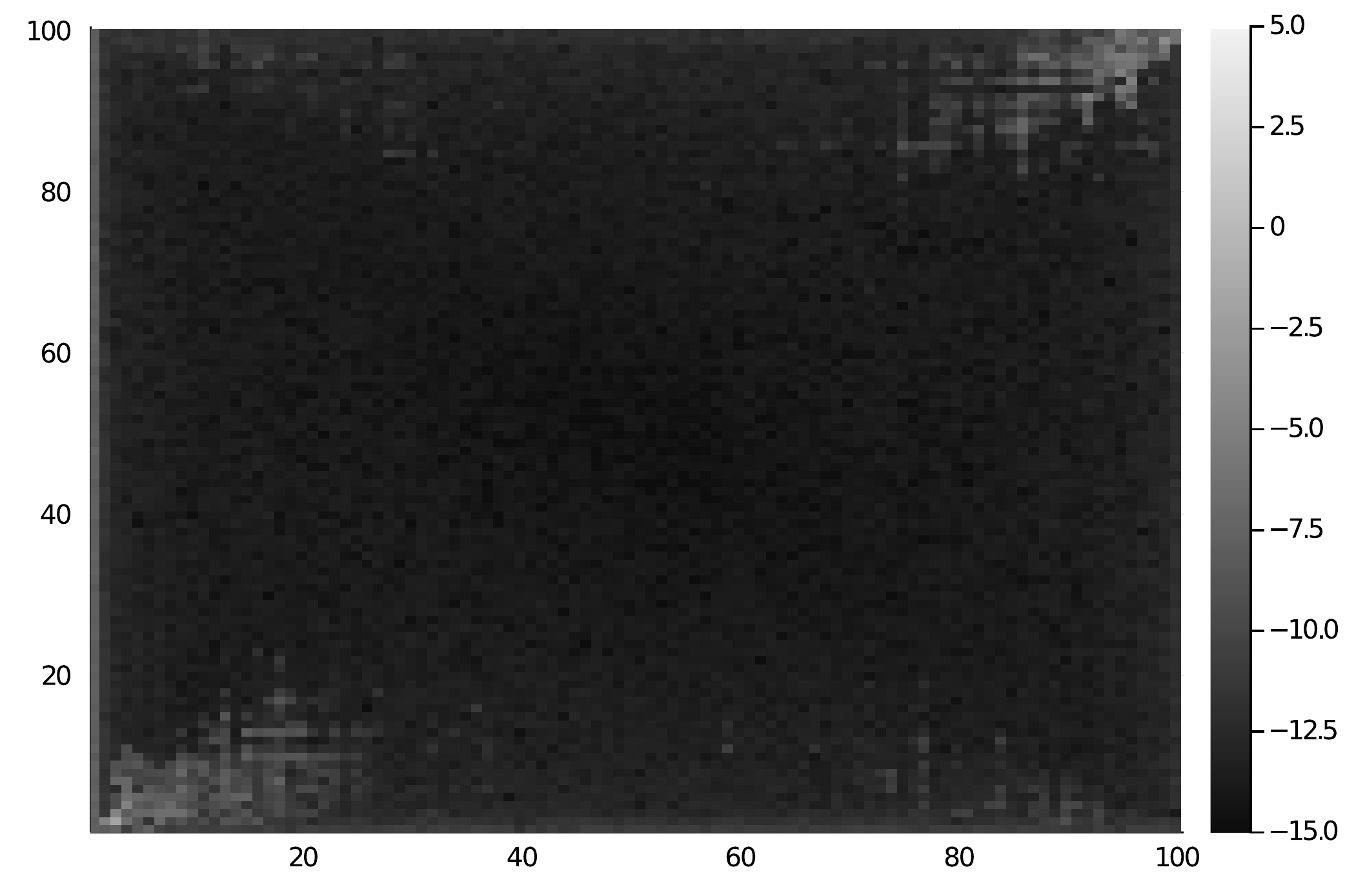}
        \caption{Five iterations}
    \end{subfigure}    
         \begin{subfigure}[b]{0.33\textwidth}
        \centering
        \includegraphics[width= \textwidth]{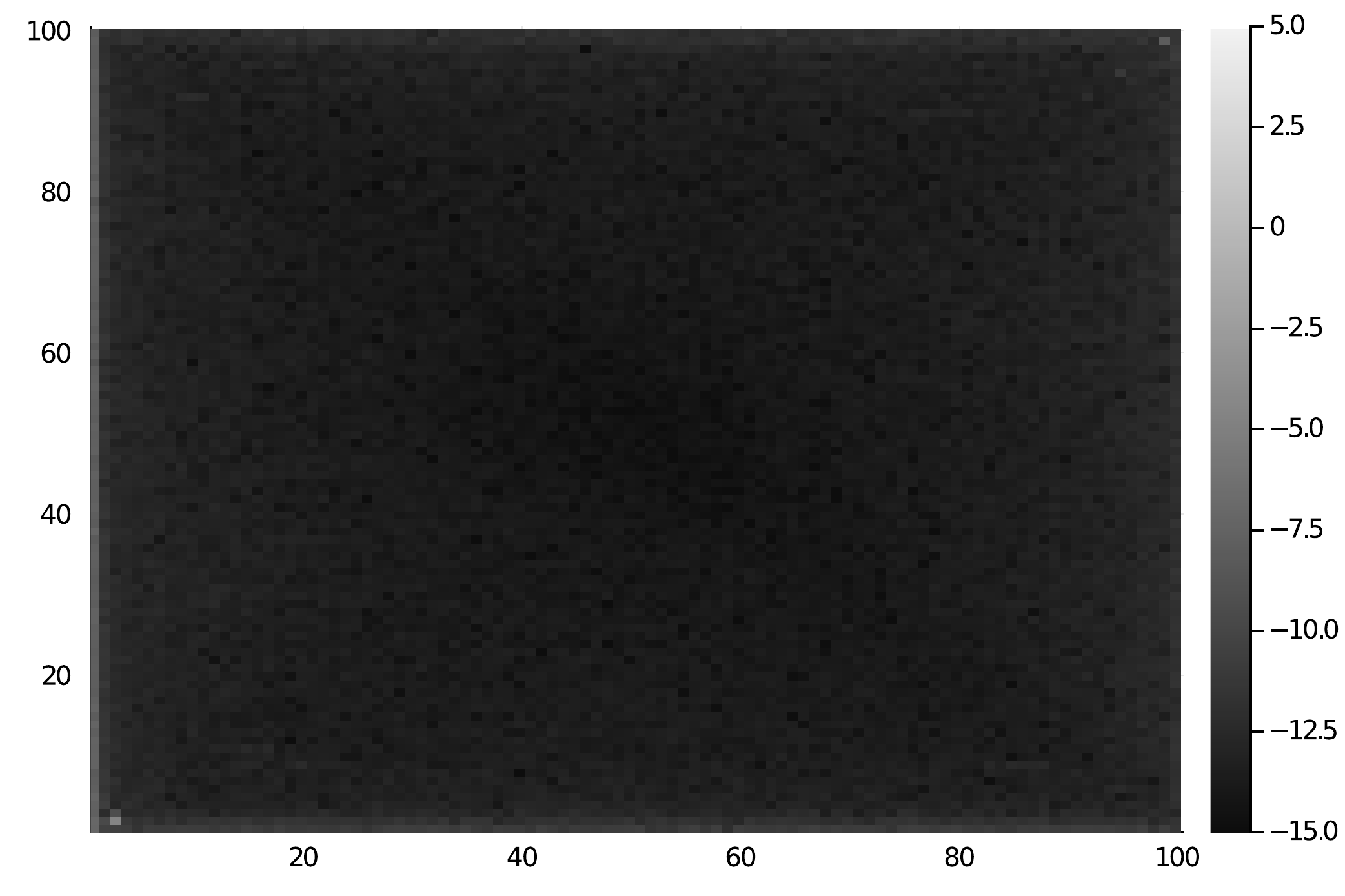}
        \caption{Six iterations}
    \end{subfigure}
     \begin{subfigure}[b]{0.33\textwidth}
        \centering
        \includegraphics[width=\textwidth]{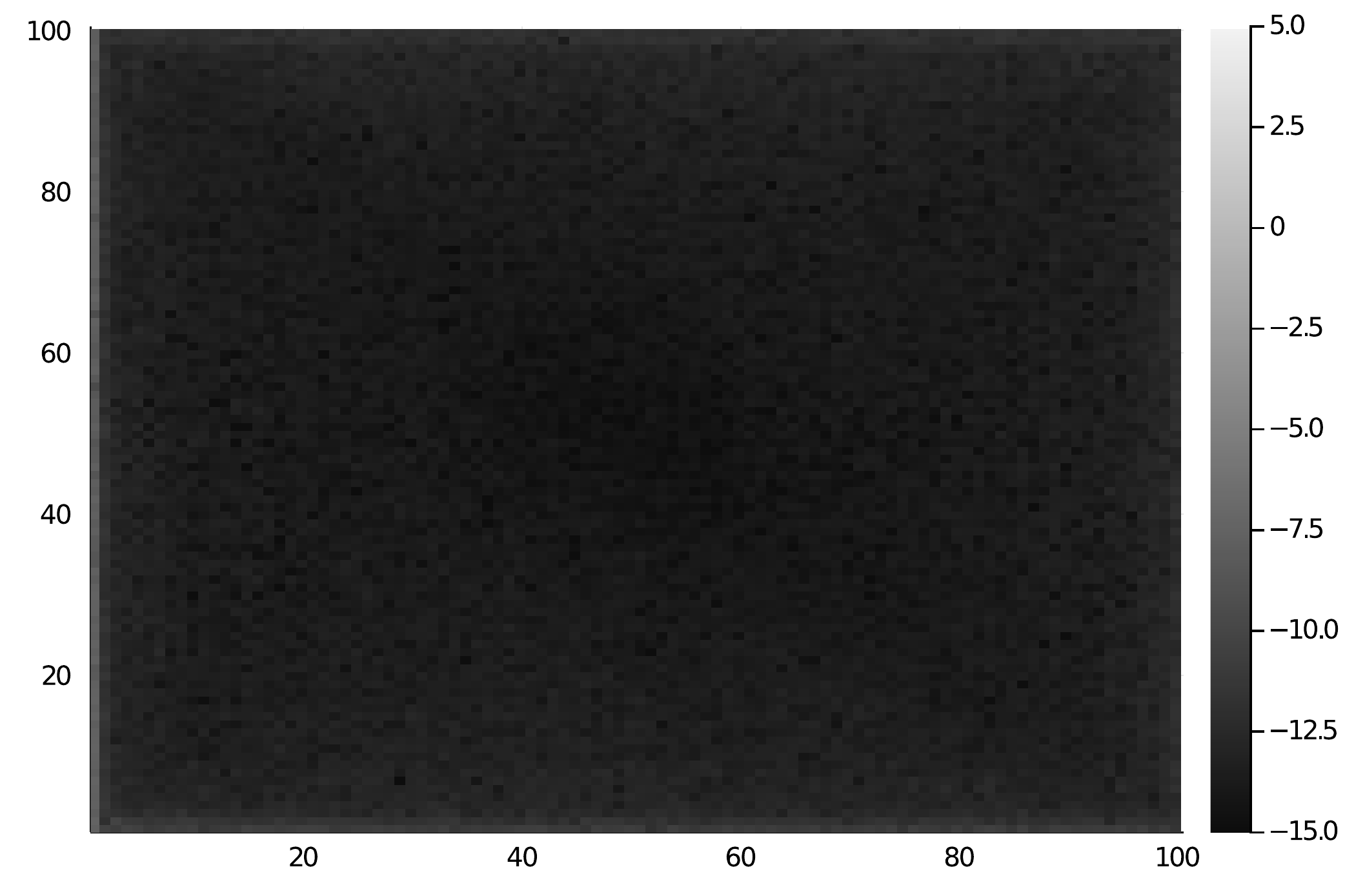}
        \caption{Seven iterations}
    \end{subfigure}    

        \caption{Testing Algorithm \ref{algo} with random $100\times 100$ matrices fulfilling the assumptions. Every picture is taken after solving one eigenvalue problem in line 4 or 6 of the algorithm. The axis label the indices of the computed eigenvalue and the colorscale shows the sum of the absolute values of the $i$-th and $j$-th eigenvalue in~\eqref{2paraev} on a logarithmic scale. }\label{exp: random}
\end{figure}

In this section, we present results from numerical experiments to test the performance of Algorithm~\ref{algo}. For a measure of the error we use Definition~\ref{defindex}. For a given approximate eigenvalue $(\lambda,\mu)$ we compute the $i$-th smallest eigenvalue of $A_1+\lambda B_1+\mu C_1$ and the $j$-th smallest eigenvalue of $A_2+\lambda B_2+\mu C_2$ and take the sum of the respective absolute values. This quantity is zero if and only if $(\lambda,\mu)$ is an eigenvalue with index $(i,j)$. This is also the sum of the corresponding residual norms with the corresponding normalized eigenvectors.

In a first experiment we generated matrices satisfying Assumptions~\ref{Ass1} and~\ref{Ass2} randomly. 
For the matrices $A_1$ and $A_2$ we generated $n\times n$ and $m\times m$ matrices with independent standard Gaussian distributed entries and took the symmetric part.
 For the matrices $B_1,B_2,C_1,C_2$ we generated matrices  $S_1$ and $S_2$ with Gaussian distributed  entries and an $n$ dimensional array $b_1$ with values
  uniformly distributed between $-0.5$ and $0.5$ and an $m$ dimensional array $b_2$ with values uniformly distributed between $-1.5$ and $-0.5$. We then chose the matrices 
\[
B_1=S_1^{}\Diag(b_1)S_1^\top,\quad B_2=S_2^{}\Diag(b_2)S_2^\top,\quad C_1=-S_1^{}S_1^\top,\quad C_2=S_2^{}S_2^\top,
\]
where $\Diag(b_i)$ is the matrix with the entries of $b_i$ on its diagonal.

\begin{figure}[t]
         \begin{subfigure}[b]{0.33\textwidth}
        \centering
        \includegraphics[width= \textwidth]{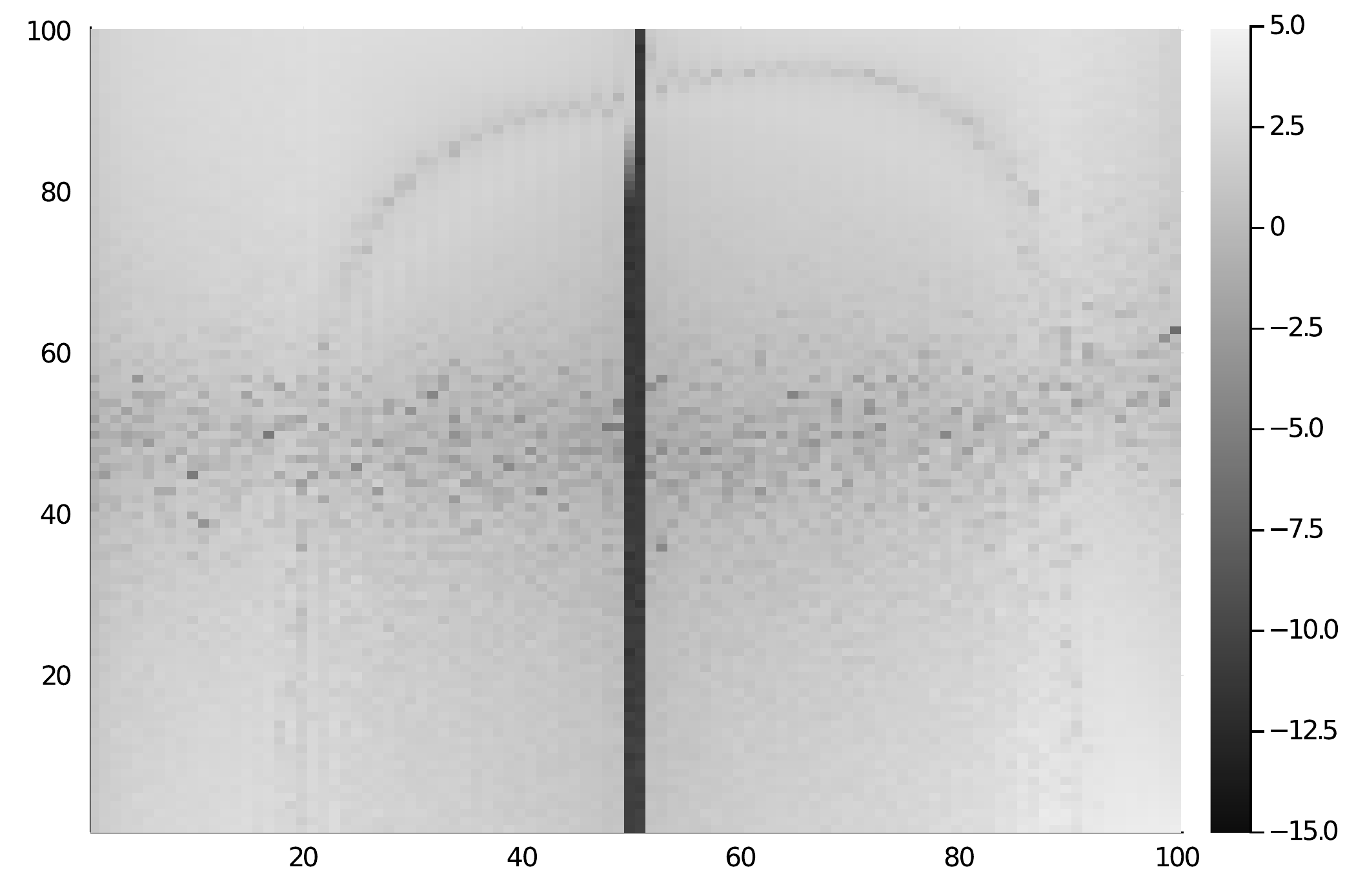}
        \caption{Two iterations}
    \end{subfigure}
    \begin{subfigure}[b]{0.33\textwidth}
        \centering
        \includegraphics[width=\textwidth]{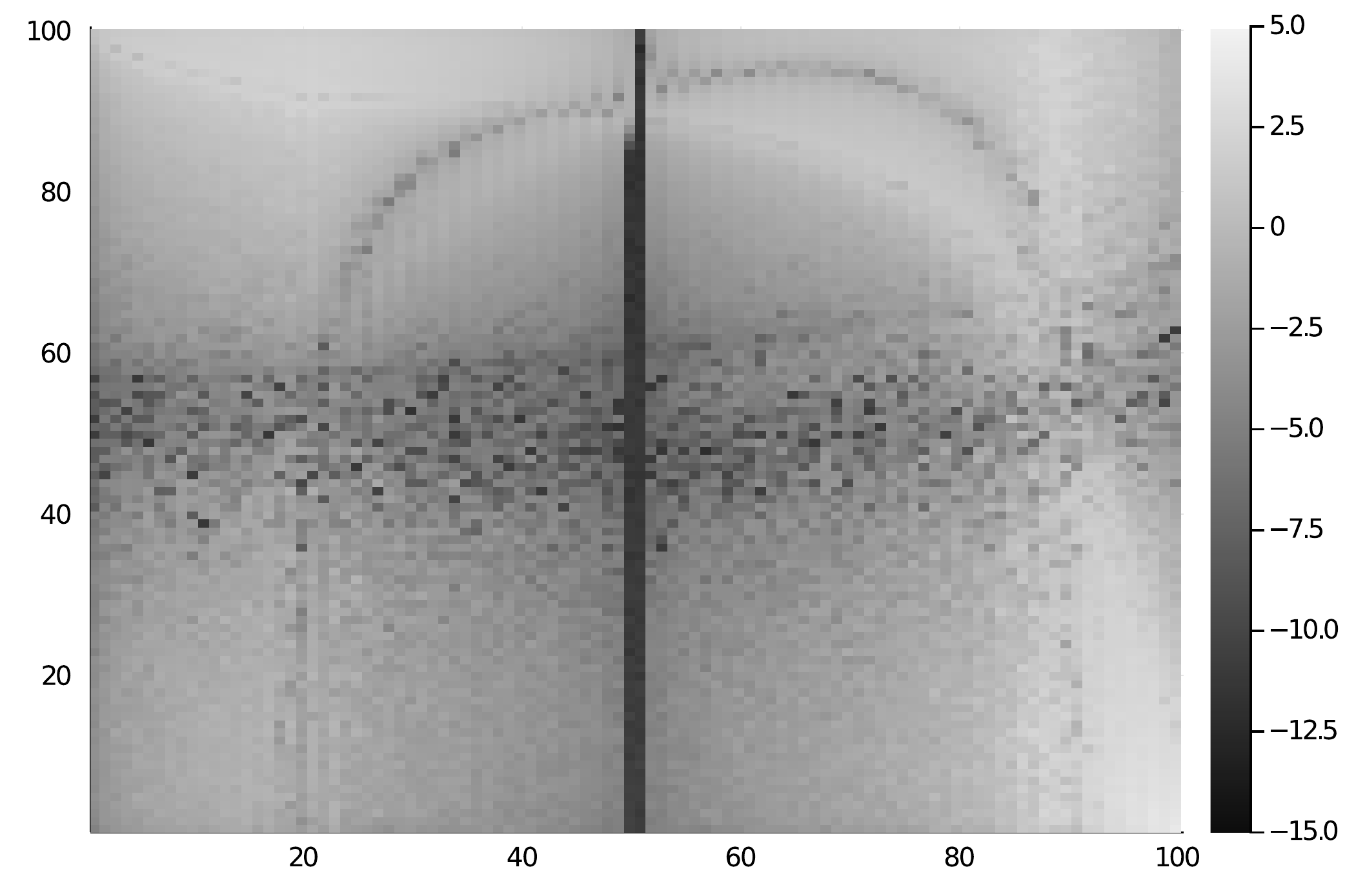}
        \caption{Three iterations}
    \end{subfigure}
     \begin{subfigure}[b]{0.33\textwidth}
        \centering
        \includegraphics[width= \textwidth]{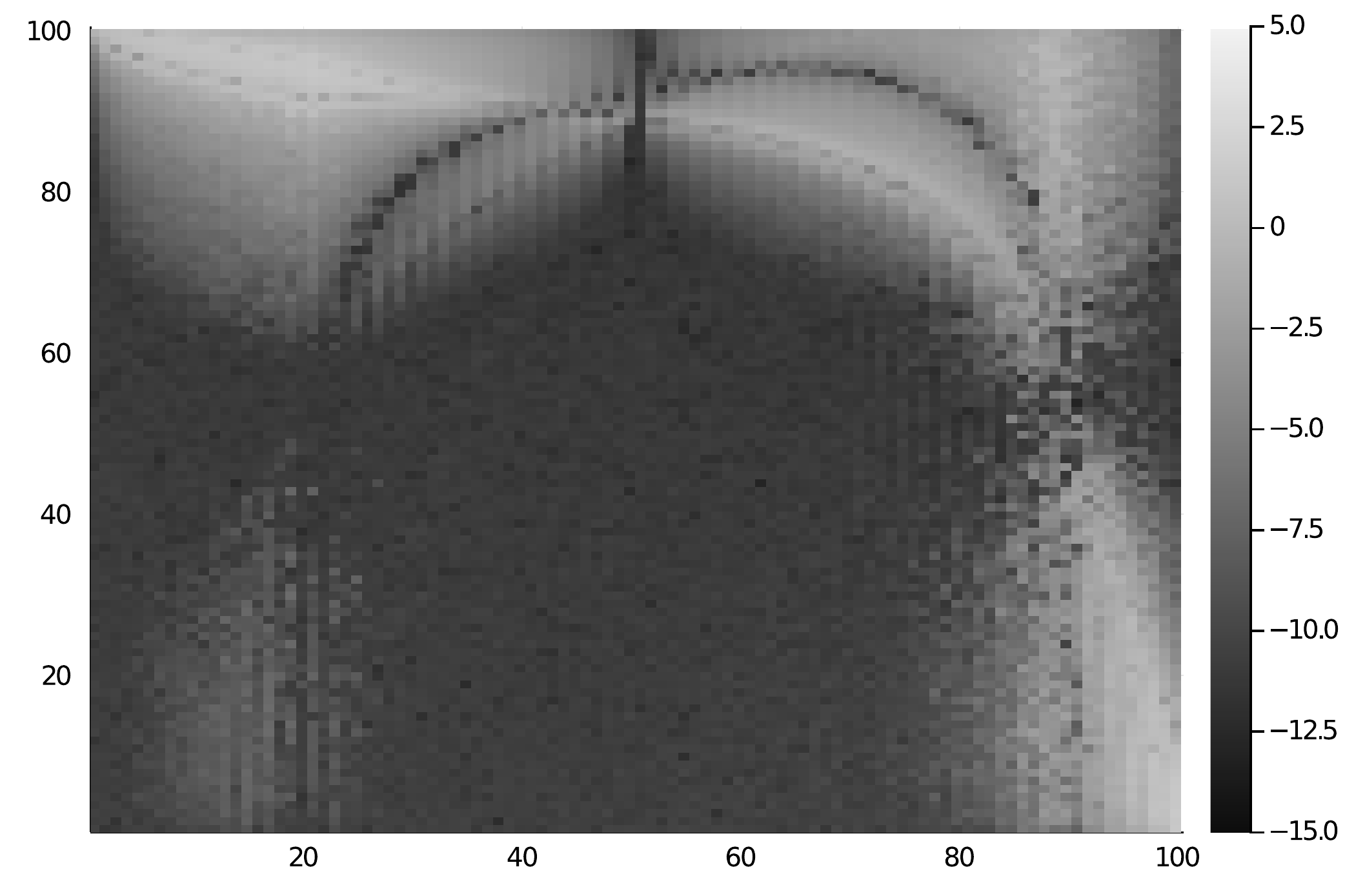}
        \caption{Four iterations}
    \end{subfigure}

     \begin{subfigure}[b]{0.33\textwidth}
        \centering
        \includegraphics[width=\textwidth]{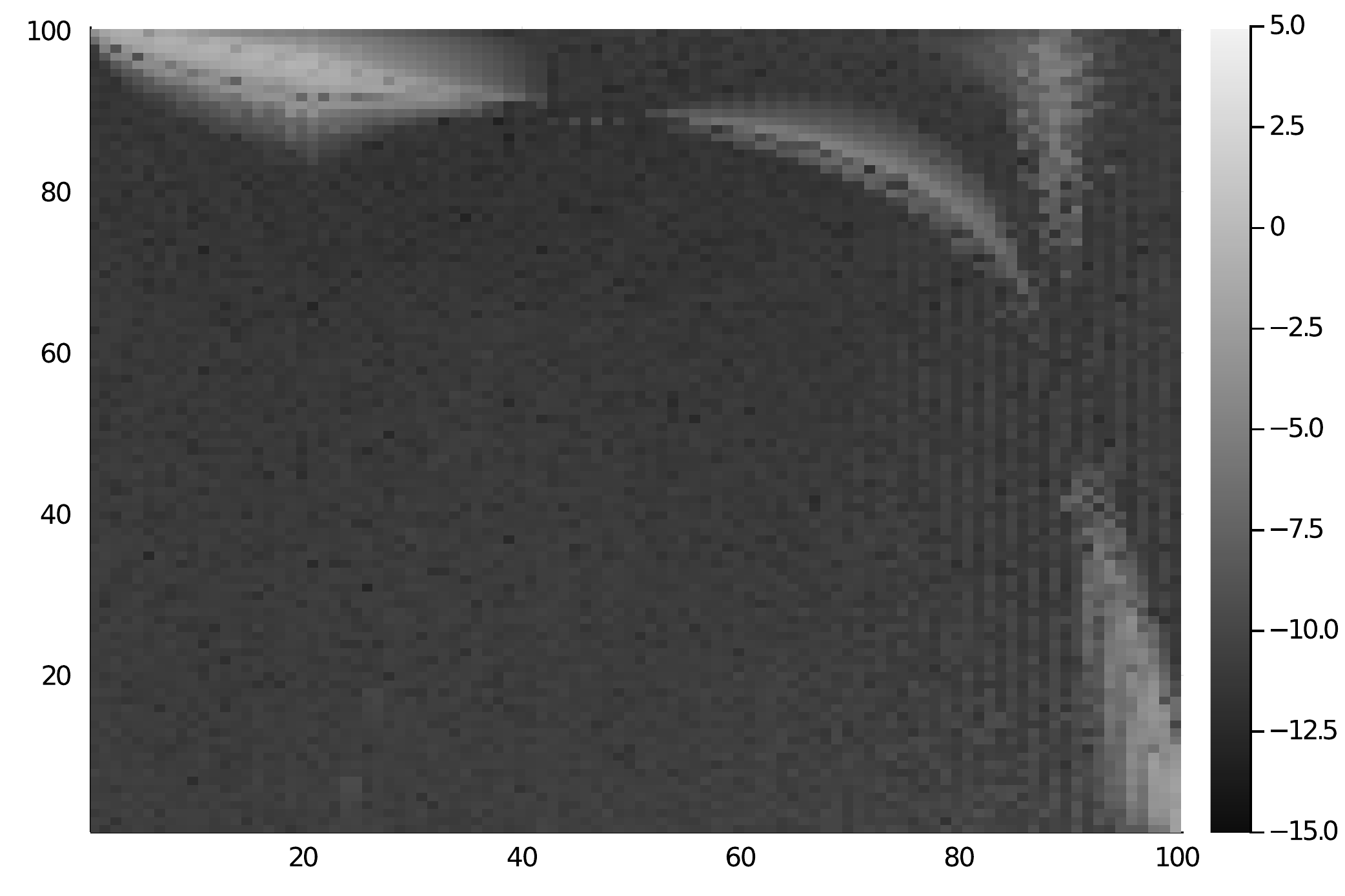}
        \caption{Five iterations}
    \end{subfigure}    
         \begin{subfigure}[b]{0.33\textwidth}
        \centering
        \includegraphics[width= \textwidth]{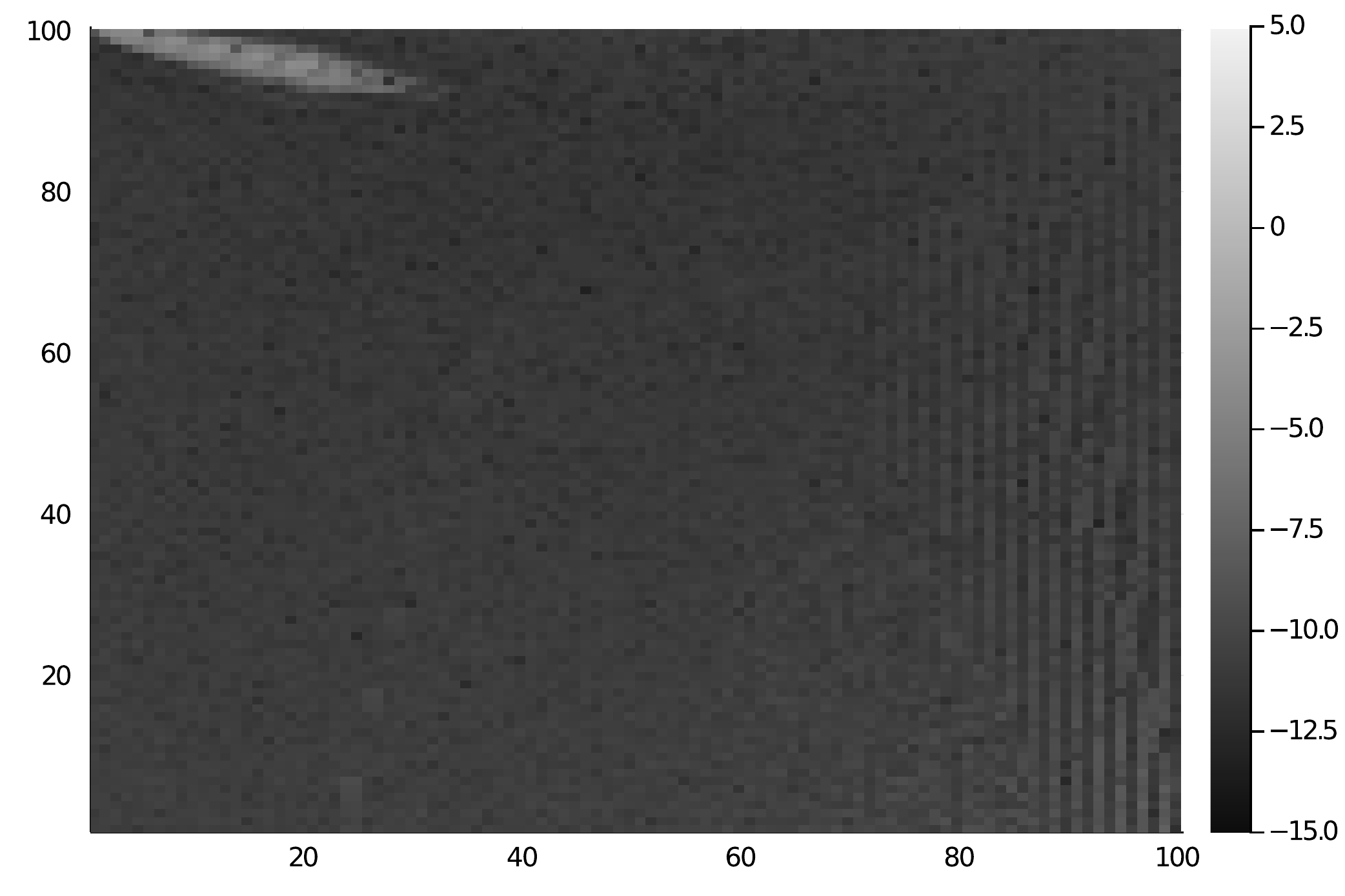}
        \caption{Six iterations}
    \end{subfigure}
     \begin{subfigure}[b]{0.33\textwidth}
        \centering
        \includegraphics[width=\textwidth]{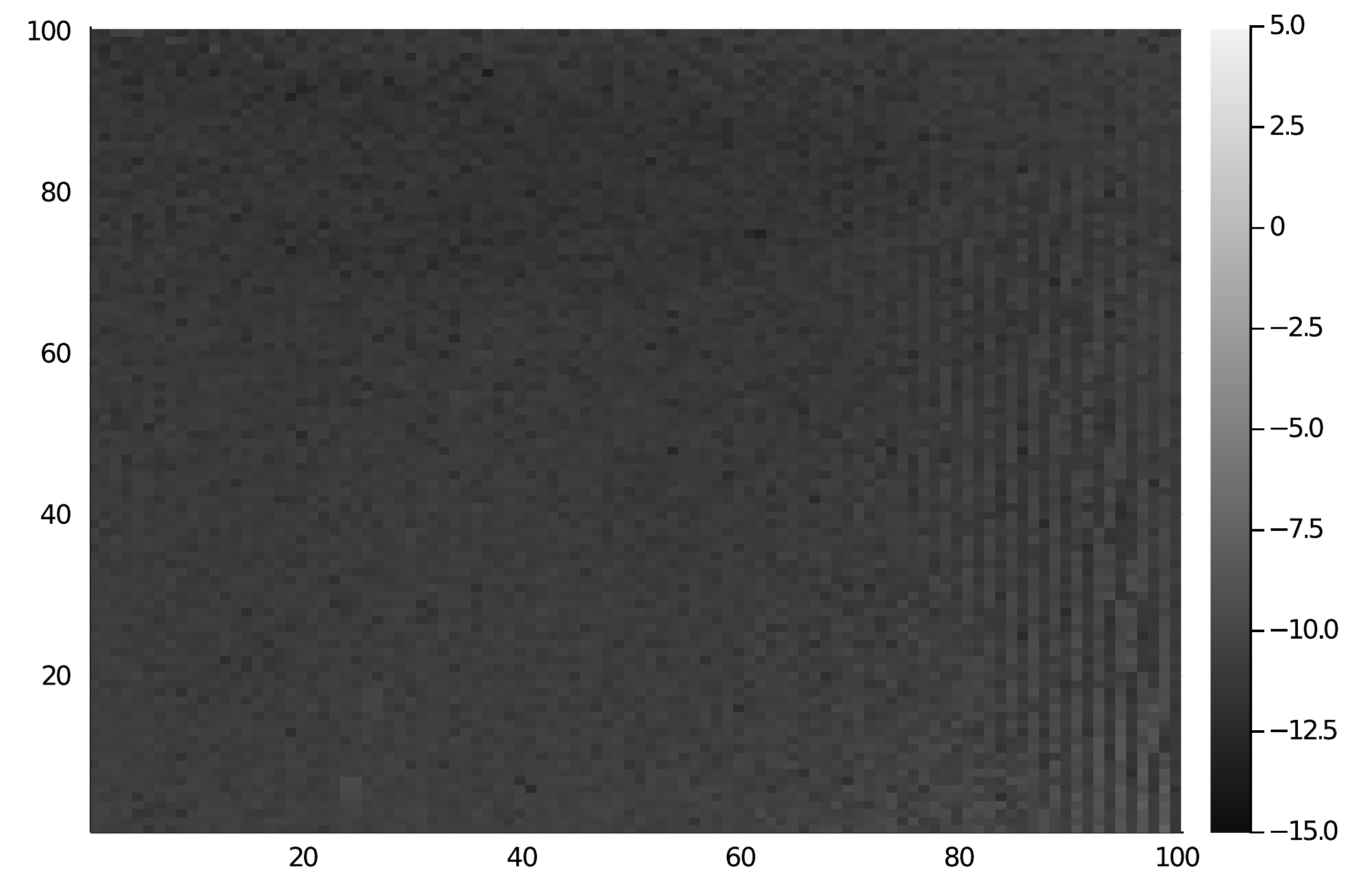}
        \caption{Seven iterations}
    \end{subfigure}    
    \caption{Testing Algorithm \ref{algo} with a discretization of~\eqref{ex: elliptic} on a  $100 \times 100$ grid. Every picture is taken after solving one eigenvalue problem in line 4 or 6 of the algorithm. The axis label the indices of the computed eigenvalue and the colorscale shows the sum of the absolute values of the $i$-th and $j$-th eigenvalue in~\eqref{2paraev} on a logarithmic scale. }\label{exp: ellipse}
\end{figure}

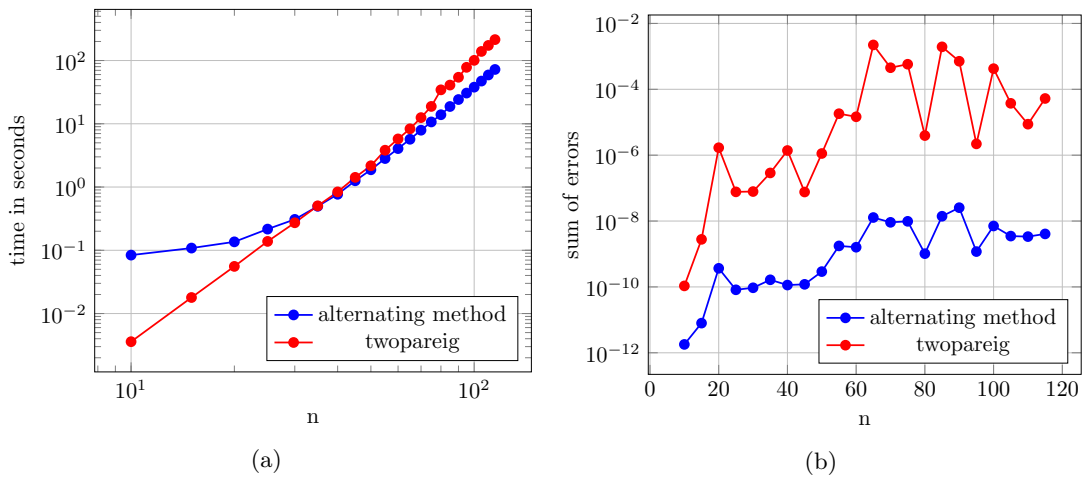
\begin{figure}[t]
\begin{subfigure}{0.48\textwidth}
\resizebox{\textwidth}{!}{
\begin{tikzpicture}
\begin{loglogaxis}[grid=major,xlabel=n,ylabel=time in seconds,
legend pos=south east] 
\addplot[mark=*,color=blue,line width=0.8pt
] file[skip first] {timedata3.txt};
\addlegendentry{alternating method}
\addplot[mark=*,color=red,line width=0.8pt
] file[skip first] {timedata2.txt};
\addlegendentry{twopareig}
\end{loglogaxis}
\end{tikzpicture}
}
\caption{}\label{subfig: times}
\end{subfigure}
\begin{subfigure}{0.48\textwidth}
\resizebox{\textwidth}{!}{
\begin{tikzpicture}
\begin{semilogyaxis}[grid=major,xlabel=n,ylabel=sum of errors,
legend pos=south east] 
\addplot[mark=*,color=blue,line width=0.8pt
] file[skip first] {errsdata.txt};
\addlegendentry{alternating method}
\addplot[mark=*,color=red,line width=0.8pt
] file[skip first] {errsdata2.txt};
\addlegendentry{twopareig}
\end{semilogyaxis}
\end{tikzpicture}
}
\caption{}\label{subfig: errors}
\end{subfigure}
\caption{Comparing the computational time~\subref{subfig: times} and precision~\subref{subfig: errors} for randomly generated examples with matrices of size $n\times n$ satisfying the assumption~\ref{Ass1} and~\ref{Ass2} for our method and \texttt{twopareig} from \cite{twopareig}. }\label{timecomp}
\end{figure}

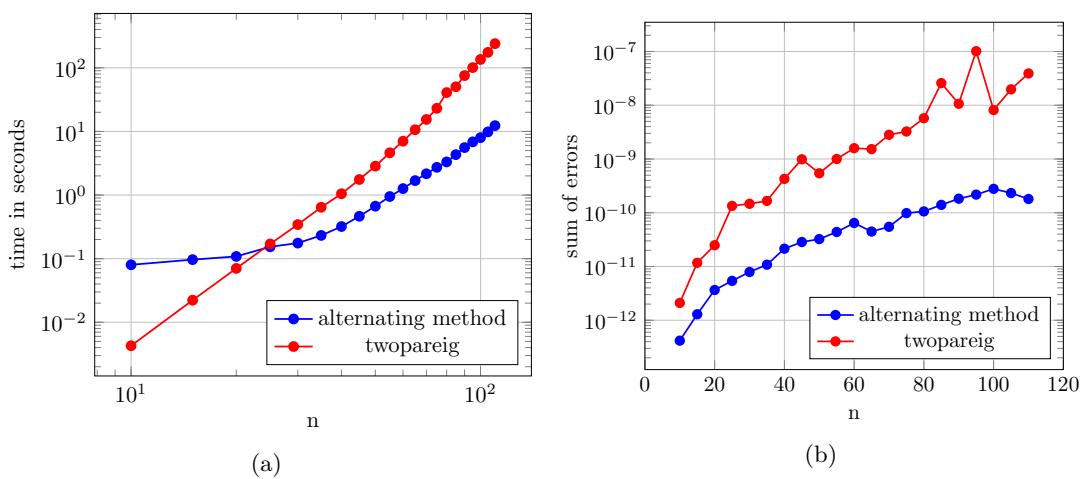
\begin{figure}[t]
\begin{subfigure}{0.48\textwidth}
\resizebox{\textwidth}{!}{
\begin{tikzpicture}
\begin{loglogaxis}[grid=major,xlabel=n,ylabel=time in seconds,
legend pos=south east] 
\addplot[mark=*,color=blue,line width=0.8pt
] file[skip first] {timedatad.txt};
\addlegendentry{alternating method}
\addplot[mark=*,color=red,line width=0.8pt
] file[skip first] {timedatad2.txt};
\addlegendentry{twopareig}
\end{loglogaxis}
\end{tikzpicture}
}
\caption{}\label{subfig: timesdiag}
\end{subfigure}
\begin{subfigure}{0.48\textwidth}
\resizebox{\textwidth}{!}{
\begin{tikzpicture}
\begin{semilogyaxis}[grid=major,xlabel=n,ylabel=sum of errors,
legend pos=south east] 
\addplot[mark=*,color=blue,line width=0.8pt
] file[skip first] {errsdatad.txt};
\addlegendentry{alternating method}
\addplot[mark=*,color=red,line width=0.8pt
] file[skip first] {errsdatad2.txt};
\addlegendentry{twopareig}
\end{semilogyaxis}
\end{tikzpicture}
}
\caption{}\label{subfig: errorsdiag}
\end{subfigure}

\caption{Comparing the computational time~\subref{subfig: timesdiag} and precision~\subref{subfig: errorsdiag} for randomly generated examples with matrices of size $n\times n$ satisfying the assumption~\ref{Ass1} and~\ref{Ass2} for our method and \texttt{twopareig} from \cite{twopareig}. For this example the matrices $B_1,B_2,C_1,C_2$ are diagonal matrices.}\label{timecompdiag}
\end{figure}

As a second example, we discretized the Helmholtz equation on half of an ellipse as in the example in Section \ref{setting}, i.e.,
\begin{align}\label{ellipseforexperiment}
\begin{aligned}
     v''(r)+(\lambda  c^2\sinh^2(r)+\mu) v(r)&=0, \quad v(0)=0=v(1)\\
    w''(\varphi)+ (\lambda  c^2\sin^2(\varphi)-\mu )w(\varphi)&=0,  \quad w(0)=0=w(\pi).
    \end{aligned}
\end{align}

For Figure~\ref{gradtest}, we generated $1000\times 1000$ matrices randomly in~\subref{grad1000rand}  and used a discretization on a $1000\times 1000$ grid in~\subref{grad1000} as described above and used Algorithm~\ref{algo} to find an eigenvalue with minimal $\lambda$, i.e., with input index $(1,1)$. After solving 6 and 7 eigenvalue problems respectively as described in lines 4 and 6 of Algorithm~\ref{algo}, we found an eigenvalue with  an error of approximately $10^{-9}$ and $10^{-10}$ respectively, confirming the result of Theorem~\ref{convtheorem}.

For Figure~\ref{exp: random} we generated $100\times 100$ matrices satisfying Assumptions~\ref{Ass1} and~\ref{Ass2} as described above and use Algorithm~\ref{algo} to find every eigenvalue, i.e., for every input index $(i,j)\in\{1,\dots,100\}\times\{1,\dots,100\}$. The axis describe the input index of Algorithm~\ref{algo} and the greyscale describes the base ten logarithm of the error described above. An iteration is computing the solution of one eigenvalue problem in lines 4 and 6 of Algorithm~\ref{algo}. After 7 iterations the highest error was $4\cdot 10^{-8}$ and the smallest errors were in the range of machine precision.

In Figure~\ref{exp: ellipse} we used a discretization of the Helmholtz equation~\eqref{ellipseforexperiment} on a $100\times 100$ grid and repeated the experiment of Figure~\ref{exp: random} for the resulting matrices. Again, after 7 iterations the highest error was $3\cdot 10^{-8}$ and the smallest errors were in range of machine precision. These experiments suggest that Algorithm~\ref{algo} can indeed be used to find every eigenvalue of right definite two-parameter eigenvalue problems.

In Figure~\ref{timecomp}\subref{subfig: times} we compared the time of both Algorithm~\ref{algo} and the algorithm {\verb|twopareig|}\cite{twopareig} for computing every eigenvalue of a two-parameter eigenvalue problem of varying sizes $n=m$. We generated matrices satisfying the assumptions as above. Notice that Algorithm~\ref{algo} can be run in parallel for different input indices to further improve efficiency. The experiment was run on a Intel Xeon Gold 6144 at 3.5 GHz with 384 GB RAM. We used 8 cores with 16 threads. We chose to solve 10 small eigenvalue problems in Algorithm~\ref{algo}, corresponding to $k=1,\dots,5$ in line 2.
For larger $n$ our method was indeed faster, and the asymptotic slope  of the time -$n$ graph on a loglog scale is smaller, indicating that the asymptotic computational cost is lower. In Figure~\ref{timecomp}\subref{subfig: errors} we measure the sum of the $n^2$ errors as described above.  We observe that our method computed eigenvalues with  higher accuracy. 

Finally, we repeated the last experiment, but we chose $S_1=I_n=S_2$. This made the matrices $B_1,B_2,C_1$ and $C_2$ diagonal, which effectively transformed the generalized eigenvalue problems in Algorithm~\ref{algo} into ordinary eigenvalue problems to further improve efficiency. When solving for every eigenvalue of the two-parameter problem, we can perform the left and right actions of $(-C_1)^{-\frac{1}{2}}$ and  $C_2^{-\frac{1}{2}}$ respectively and afterwards diagonalize $B_1$ and $B_2$. This justifies this experiment. The results are depicted in Figure~\ref{timecompdiag}. In Figure~\ref{timecompdiag}\subref{subfig: timesdiag} we see that the alternating method is faster than the method  {\verb|twopareig|}\cite{twopareig} for even smaller $n$. Again our method is more accurate.

\section{Conclusion and outlook}

We presented a new method for computing eigenvalues of the two-parameter eigenvalue problem. Our approach only requires solving generalized eigenvalue problems of the size of the matrices of the two-parameter problem and can therefore reduce the complexity compared to conventional methods. Our method also uses a search for the eigenvalues by index, which makes it possible to find successive eigenvalues of the two-parameter eigenvalue problem without deflation.

So far the technique of our proof only established global convergence for extremal eigenvalues and under definiteness assumptions. The numerical experiments however indicated convergence for every eigenvalue. Proposition~\ref{prop: local convergence} gives insight into the local convergence, but a global convergence proof remains an open problem. Although there are many classes of two-parameter eigenvalue problems that satisfy the assumptions (eventually after performing an affine transformation), many other interesting applications do not. Therefore it would be important to investigate if a generalization to 
non-singular problems, i.e., the operator $M_0$ is invertible, as in~\cite{JacobiDavidson04} or even to the general case as in~\cite{Hochstenbach2019} is possible.

This paper relies heavily on the assumptions \ref{Ass1} and \ref{Ass2}. A natural question is to relax these conditions, in particular the definiteness condition \ref{Ass2}. Indeed under weaker assumptions (such as when the matrices are \emph{almost} definite, with a few eigenvalues of the opposite sign), using inertia laws~\cite{nakatsukasa2019inertia} one can show that the generalized eigenvalue problem~\eqref{matrixev}, and hence also~\eqref{2paraev}, has many real eigenvalues. It would be of interest to investigate the applicability of the results here in such situations.

Finally, another interesting generalization that could be considered is to multiparameter eigenvalue problems with more than 2 parameters. The eigenvectors then form rank-one tensors 
 similar to \eqref{matrixev} \cite{Atkinson72}. Again an alternating approach as in \cite{Holtz2012} can be used, however our proof technique will not work and in practice the generalization will not easily assure convergence even for extremal indices. A similar approach using the Tensor-Train format is used in~\cite{TTmulti}.
%

\bibliographystyle{plain}
\bibliography{main}

\end{document}